\documentclass [10pt]{amsart}
\usepackage{amsmath,amsfonts,amsthm,amssymb,pxfonts}
\usepackage{Tabbing}
\usepackage{lastpage}
\usepackage{mathtools}
\usepackage{esint}
\usepackage{color}
\usepackage{mathrsfs}
\usepackage{hyperref}

 \textheight=8.2 true in
    \textwidth=5.0 true in
     \topmargin 30pt
      \setcounter{page}{1}
 
\newtheorem{theorem}{Theorem}
\newtheorem{definition}[theorem]{Definition}

\newtheorem{lemma}[theorem]{Lemma}

\newtheorem{corollary}[theorem]{Corollary}
\newtheorem{proposition}[theorem]{Proposition}
\newtheorem{remark}[theorem]{Remark}

\DeclareMathOperator{\ad}{ad}


\begin{document}

\title[Stability of the cubic nonlinear Schr\"{o}dinger equation on Irrational Torus]{Stability of the cubic nonlinear Schr\"{o}dinger equation on Irrational Torus}

\author{Gigliola Staffilani}
 \address{Department of Mathematics,  Massachusetts Institute of Technology,  77 Massachusetts Ave,  Cambridge,  MA 02139-4307 USA. email:
gigliola@math.mit.edu.}
\thanks{
 Gigliola Staffilani was partially supported by NSF   grants  DMS 1362509 and DMS 1462401, the Simons Foundation and the John Simon Guggenheim Foundation.}
\author{Bobby Wilson }
\address{Department of Mathematics,  Massachusetts Institute of Technology,  77 Massachusetts Ave,  Cambridge,  MA 02139-4307 USA. email:
blwilson@mit.edu.}

\subjclass[2010]{37K45, 37K55}

\keywords{}

\begin{abstract}
A characteristic of the defocusing cubic nonlinear Schr\"odinger equation (NLSE), when defined so that the space variable is the multi-dimensional square (hence rational) torus, is that  there exist solutions that start with arbitrarily small norms Sobolev norms and  evolve to develop  arbitrarily large modes at later times; this  phenomenon is recognized as a weak  energy transfer to high modes for the NLSE, \cite{CKSTT10} and \cite{CF12}. In this paper, we show that when the system is considered on  an irrational torus, energy transfer  is more difficult to detect.
\end{abstract}

\maketitle 

\tableofcontents

%
%
%

\section{Introduction}\label{sec.intro}

Consider the Cauchy problem for the defocusing cubic Nonlinear Schr\"odinger Equation (NLSE):

	\begin{align}\label{NLS}
		\left\{ \begin{array}{ll} i\partial_t \psi= \Delta \psi-|\psi|^2\psi, & x\in \mathbb{T}^2, t \in \mathbb{R}\\
						\psi(0)= \psi_0 \in H^s(\mathbb{T}^2),   \end{array} \right.
	\end{align}
where $\mathbb{T}^2$ is the two dimensional torus and  $H^s(\mathbb{T}^2)$ is the usual Sobolev space of index $s$.
The significance of considering the case in which our spatial variable, $x$, belongs to the two-dimensional torus stems from an important dichotomy between the case when space is one-dimensional and when it is higher than one. This dichotomy exists in both the scope of what we know about each case and the dynamical behavior of each system.  It is well known that the one-dimensional defocusing cubic NLSE, both periodic and non,  is an integrable system, see for example \cite{FT}, and as a consequence the rigidity of this structure contributes to the uniform in  time control of any Sobolev norm of the solutions. In other words 
one can combine the conservation laws at  order $k\in \mathbb{N}$ to show that if for the initial data $\|\psi_0\|_{H^k}<\infty$, then for all $t\in \mathbb{R}$ we have that the solution satisfies $\|\psi(t)\|_{H^k}\leq C.$
In particular this control shows that there is no energy transfer from low to high modes. 

In two-dimensions the integrability is lost, and hence the question of    energy transfer to high modes is more subtle.  In \cite{CKSTT10}, the authors demonstrated that for any $s >1$  and for any two positive real numbers $\varepsilon$, $M$, there exists a solution $w$ to \eqref{NLS},  such that the Sobolev norms $\|w(0)\|_s<\varepsilon$ and $\|w(T)\|_s>M$, for $T$ large enough. Later Hani \cite{H11} generalized this energy transfer phenomenon   in his thesis.  

A question that remains is whether or not one can quantify the rate of the kind of energy transfer   recalled above.  For example, in \cite{B98}, Bourgain shows that the linear Schr\"odinger equation with a time-dependent potential can produce solutions whose Sobolev norms grow on the order of a power $\log |t|$, see also \cite{W08} and \cite{MR17}.   Furthermore, Carles and Faou \cite{CF12} showed quantitative energy transfer  results using a Birkhoff normal form transformation coupled with standard dynamical systems techniques.   In all these works the torus $\mathbb{T}^2$ is assumed to be a square torus, or more generally a rational torus, according to the following definition.
For $\omega= (\omega_1, \omega_2,..., \omega_d) \in \mathbb{R}^d_+$, we say that $\omega$ is an irrational vector if 
	\begin{align*}
	\omega \cdot m \neq 0 \mbox{ for any } m \in \mathbb{Z}^d,
	\end{align*}
	otherwise we say that it is rational.
  If $\mathbb{T}:= \mathbb{R}/\mathbb{Z}$ is the standard torus, then let 
	\begin{align*}
		\mathbb{T}^d_{\omega}= \prod_{i=1}^d \mathbb{R}/(\omega_i \mathbb{Z}).
	\end{align*}
 We say that $\mathbb{T}^d_{\omega}$ is rational or irrational according to the rationality or irrationality  of $\omega$. 
 
In this work we show that the mechanisms to prove  energy transfer in the two works \cite{CF12} and \cite{CKSTT10} recalled above are not enough to prove energy transfer  when the torus is irrational. We are not claiming that in this case long time energy transfer  should not  be expected, but that a more sophisticated constructive method may be needed. 
 
 Before we move forward we should mention that until recently the assumption of rationality of the torus, where a problem such as  \eqref{NLS}  was posed, had been essential in order to prove well-posedness for example. In fact until the recent work of Bourgain and Demeter \cite{BD}, where the full range of sharp\footnote{Up to an $\epsilon$ loss of derivative, recovered in some cases in the work of Killip and Visan \cite{KV16}.} Strichartz estimates where proved for any torus $\mathbb{T}^d_{\omega}$ as a consequence of the $l^2$ decoupling conjecture, some sharp\footnote{For some Strichartz estimates on  irrational tori with loss of derivative see also \cite{B007, CW10, GOW}.} Strichartz estimates had been established by Bourgain \cite{B93} only for rational tori using techniques from analytic number theory. Following the work \cite{BD}, Fan, Staffilani, Wang, and Wilson \cite{FSWW17},   Deng, Germain, and Guth \cite{DGG17},  Deng, Germain \cite{DG17} and  
Deng \cite{D17}, also considered \eqref{NLS} when $\mathbb{T}^d_\omega$ is  irrational. More precisely
for fixed, irrational $\omega$, consider the periodic NLSE initial value problem 
\begin{align}\label{irrNLS}
\left\{ \begin{array}{ll} i\partial_t \psi= \Delta \psi-|\psi|^2\psi, & x\in \mathbb{T}_{\omega}^2, t \in \mathbb{R}\\
						\psi(0)= \psi_0 \in H^s(\mathbb{T}_{\omega}^2),   \end{array} \right.
\end{align}
where  $x\in\mathbb{T}_{\omega}^d$ and $\Delta$ is the Laplace-Beltrami operator for the irrational torus\footnote{One must make the decision to scale the torus or the Laplace operator and we will make sure not to scale more than once.}. In \cite{FSWW17}, the authors proved some improved Strichartz estimates, and as a consequence they where able to extend to \eqref{irrNLS} the same results of local and global   well-posedness in the Sobolev space $H^s(\mathbb{T}^2_{\omega})$ as the ones already available for \eqref{NLS} in rational tori, see \cite{B93, DPST07}. Deng, Germain, and Guth \cite{DGG17} proved that the dynamics of the nonlinear Schr\"odinger  flow are, in some sense, ``better" behaved by proving   that Strichartz estimates on  irrational tori hold  for   longer time than on the standard torus, and quantified  the difference between the two cases with the irrationality of $\omega$. Deng in \cite{D17} and Deng-Germain in \cite{DG17}, proved that for certain nonlinear Schr\"odinger  equations on irrational tori the degree of the polynomial bounds for higher Sobolev norms are better than the ones for rational tori, see also \cite{B96, St97, B004, S1-11, S2-11, S12, CKO12}. In a sense these results  support the intuition that when the flow is allowed to evolve on an irrational torus the effects from the boundary  are less severe and  the flow has more room to evolve. This phenomenon should be interpreted  as a very weak nonlinear dispersion of the flow. In the spirit of \cite{DGG17, DG17, D17}, we give here one more evidence that on irrational tori the nonlinear Schr\"odinger flow is in a sense more regular. We will  show in fact that the type of energy transfer results  demonstrated in \cite{CF12} are  not present in the irrational case to the extent that it is in the rational one.  

For $\psi \in H^s(\mathbb{T}_{\omega}^d)$, $s\geq 1$, \eqref{irrNLS} has the Hamiltonian form
	\begin{align*}
	 H(\psi) = \tfrac{1}{2}\int |\nabla \psi|^2+ \tfrac{1}{4} \int |\psi|^{4}.
	\end{align*}
In this case, we can also define the Hamiltonian with respect to the Fourier coefficients.  Let $e(y):= e^{2\pi i y}$, then 
	\begin{align*}
		\hat{\psi}_k:=\hat{\psi}(k):= \int_{\mathbb{T}_{\omega}^d} \psi(x) e( \langle x, k\rangle_{\omega}) \,dx=\int_{\mathbb{T}_{\omega}^d} \psi(x) e\left( \sum_{i=1}^d k_i\omega_ix_i \right) \,dx
	\end{align*}
for $k \in \mathbb{Z}^d$  as
	\begin{align}\label{ham}
		H( \hat{\psi}) &= \tfrac{1}{2} \sum_{k \in \mathbb{Z}^d} \lambda_k |\hat{\psi}_k|^2 + \tfrac{1}{4}\sum_{ k_1+k_2= h_1+h_2} \hat{\psi}_{k_1} \hat{\psi}_{k_2}\bar{\hat{\psi}}_{h_1}\bar{\hat{\psi}}_{h_2}\\
				&=:H_0 + P,
	\end{align}
where $\lambda_k := \sum_{i=1}^d \omega_i^2 k_i^2$.
 
 \medskip
To state the main theorem below let us denote with $\|\cdot\|_s$ the norm in $H^s(\mathbb{T}_{\omega}^d)$, and  let us denote with $Q_M$ the $2M$-length box in $\mathbb{Z}^2$ centered at the origin , $Q_M:=\left\{k \in \mathbb{Z}^2: \|k\|_{\infty}\leq M\right\}$.

\begin{theorem}\label{mainthm}
Fix $s>1$, $\gamma <3$, and $M>0$. If $\omega \in \mathbb{R}^2_{+}$ is irrational, then  there exists $\varepsilon_{s, M,\omega, \gamma}>0$  such that for  all $\psi_0 \in C^{\infty}(\mathbb{T}_{\omega}^2)$ with $\|\psi_0\|=\varepsilon<\varepsilon_{s, M, \omega, \gamma}$ and  $\mbox{supp } \hat{\psi}_0 \subset Q_M$, \eqref{irrNLS} has a unique solution $\psi \in C([0,1/\varepsilon^2]; H^{s}(\mathbb{T}^2_{\omega}))$ such that
	\begin{align*}
		|\hat{\psi}_j(t)| < \varepsilon^{\gamma}
	\end{align*}
for all $t \in [0,1/\varepsilon^2]$ when $j \not\in Q_M$.

%
\end{theorem}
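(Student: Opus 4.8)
The plan is to control the growth of the Fourier modes outside the box $Q_M$ over the time interval $[0,1/\varepsilon^2]$ by exploiting the structure of the cubic nonlinearity together with the non-resonance properties forced by the irrationality of $\omega$. The dynamical equation for each Fourier coefficient reads, after passing to the interaction (or "modified") picture, as something of the form
\[
i\partial_t \hat\psi_j = \sum_{k_1 - k_2 + k_3 = j} e^{it\Omega} \,\hat\psi_{k_1}\bar{\hat\psi}_{k_2}\hat\psi_{k_3},
\]
where the \emph{resonance phase} is
\[
\Omega = \lambda_{k_1} - \lambda_{k_2} + \lambda_{k_3} - \lambda_j = \sum_{i=1}^2 \omega_i^2\bigl((k_1)_i^2 - (k_2)_i^2 + (k_3)_i^2 - j_i^2\bigr).
\]

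**The central observation** is that the irrationality of $\omega$ means $\Omega$ can vanish only when the frequency interaction is trivial (the momentum condition $k_1-k_2+k_3=j$ combined with $\Omega=0$ forces $\{k_1,k_3\}=\{k_2,j\}$ as sets). Indeed, if $\omega_1^2/\omega_2^2$ is irrational, a sum of the form $\omega_1^2 a + \omega_2^2 b = 0$ with integers $a,b$ forces $a=b=0$; this is precisely where irrationality does the work that is unavailable in the rational case. Consequently the genuinely resonant interactions do not produce any net transfer of energy \emph{out} of the support $Q_M$: the only resonant terms feeding a mode $j\notin Q_M$ require one of the input frequencies to already lie outside $Q_M$. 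First I would make this dichotomy precise, isolating the resonant set and showing it cannot seed growth from data supported in $Q_M$.

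**The main estimate** then is a Birkhoff-normal-form / bootstrap argument run on the non-resonant part. On the interval $[0,1/\varepsilon^2]$ one integrates by parts in time in the oscillatory factor $e^{it\Omega}$; each integration by parts gains a factor $\Omega^{-1}$ but the non-resonant phases are bounded below in absolute value by a quantity $\delta_{\omega,M}>0$ depending on $\omega$ and $M$ (again via irrationality, now quantified as a small-divisor lower bound on the finitely many relevant phases). I would set up the bootstrap assumption $\sup_{t\le T}\|\hat\psi(t)\|_{\ell^\infty(Q_M^c)} \le \varepsilon^\gamma$ and $\sup_{t\le T}\|\psi(t)\|_s \lesssim \varepsilon$, feed these into the Duhamel iterate for $j\notin Q_M$, and check that the accumulated contribution over time $1/\varepsilon^2$ is $o(\varepsilon^\gamma)$ provided $\varepsilon<\varepsilon_{s,M,\omega,\gamma}$. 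The gain comes from trading the $\varepsilon^{-2}$ time length against the cubic smallness $\varepsilon^3$ of the nonlinearity and the normal-form gain $\delta_{\omega,M}^{-1}$, which is why the threshold $\gamma<3$ is exactly what the cubic structure can afford.

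**The hard part** will be handling the small divisors uniformly: the lower bound on the non-resonant phases $\Omega$ is finite only because the data is spectrally localized in $Q_M$ and the $H^s$ control propagates only slow leakage outside, so I must close the argument \emph{simultaneously} for the in-box and out-of-box modes rather than treating $Q_M^c$ as inert. Concretely, the normal-form transformation mixes frequencies, and one must verify that the transformation stays close to the identity (invertible, bounded on $H^s$) throughout $[0,1/\varepsilon^2]$ and that the transformed system's remainder is genuinely higher order. I expect the technical crux to be quantifying $\delta_{\omega,M}$ and ensuring that the constant $\varepsilon_{s,M,\omega,\gamma}$ absorbs its (possibly very small, $\omega$-dependent) size without the bootstrap degenerating; the contrast with the rational case—where resonant four-wave interactions of the type exploited in \cite{CKSTT10} and \cite{CF12} drive energy to high modes—then emerges precisely because those resonant channels are closed off by irrationality.
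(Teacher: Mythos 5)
Your central structural claim is false, and it fails at exactly the point where the real work of this problem lies. On an irrational torus, the momentum condition $k_1-k_2+k_3=j$ together with $\Omega=0$ does \emph{not} force $\{k_1,k_3\}=\{k_2,j\}$ as sets: take
\begin{align*}
(k_1,k_2,k_3,j)=\bigl((0,0),\,(2,0),\,(2,1),\,(0,1)\bigr),
\end{align*}
for which the momentum condition holds and $\Omega=0-4\omega_1^2+(4\omega_1^2+\omega_2^2)-\omega_2^2=0$ identically in $\omega$, while $\{k_1,k_3\}\neq\{k_2,j\}$. What irrationality actually buys is only that the single resonance equation decouples into two one-dimensional equations, one per coordinate; the arithmetic lemma (Lemma \ref{char}) then forces the \emph{first} coordinates to pair and the \emph{second} coordinates to pair, but possibly with \emph{different} matchings. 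Axis-parallel rectangles (``parallel resonances,'' as in \eqref{resok}) therefore survive as genuinely nontrivial four-wave interactions; only the tilted rectangles that drive the constructions of \cite{CKSTT10} and \cite{CF12} are excluded. The conclusion you want --- that no resonant interaction can feed a mode $j\notin Q_M$ from inputs all inside $Q_M$ --- is still true, but it must be deduced from the coordinate-pairing structure (every resonant triple feeding $j$ contains one input sharing the first coordinate of $j$ and one sharing the second, hence an input outside $Q_M$ whenever $j\notin Q_M$); you have neither stated nor proved this, and with your claim in its place the resonant dynamics would be trivially diagonal, which they are not.

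The second gap is your small-divisor bound. The nonresonant phases appearing in the Duhamel iterate are \emph{not} finitely many: your bootstrap keeps the out-of-box modes small but not zero, so interactions with arbitrarily high input frequencies occur, and for those $\Omega=\omega_1^2a+\omega_2^2b$ takes nonzero values arbitrarily close to $0$; no uniform $\delta_{\omega,M}>0$ exists, so you cannot integrate by parts on every nonresonant term. You flag this in ``the hard part'' but offer no mechanism to resolve it. The paper's resolution (Lemma \ref{iterlemma}) is to normalize away only (i) nonresonant tuples with all frequencies below a fixed cutoff $N$ --- finitely many, hence divisors bounded below --- and (ii) high-frequency tuples whose divisor exceeds $1$, keeping the ``weakly resonant'' high-frequency terms $\mathscr{U}$ in the Hamiltonian; these are harmless precisely because each requires an input outside the box. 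Relatedly, your closing estimate is marginally wrong as stated: a feedback term with one out-of-box factor $\leq\varepsilon^{\gamma}$ and two in-box factors $\leq\varepsilon$, integrated over $[0,\varepsilon^{-2}]$, contributes $C\varepsilon^{\gamma}$, not $o(\varepsilon^{\gamma})$, so the bootstrap does not close by direct iteration. One needs a Gronwall reorganization --- or, as the paper does, a comparison with the truncated resonant system, whose solutions remain \emph{exactly} supported in $Q_M$ by time-analyticity (Lemma \ref{nocasc}, Corollary \ref{nogrowth}), so that the difference obeys a Gronwall inequality with quintic forcing $\varepsilon^{5}t\leq\varepsilon^{3}$; this, together with the $O(\varepsilon^3)$ error from inverting the normal form, is where the threshold $\gamma<3$ actually comes from.
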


\begin{remark}\label{CF} This result should be compared to Theorem 1.1 in \cite{CF12},  where the authors consider \eqref{NLS} on a square torus and study the evolution  of a certain  initial data with zero high modes, and prove that  instead certain high modes of the solution  are significantly large in the same time frame as our Theorem \ref{mainthm}. Specifically, they show that for a very specific solution localized in $Q_1$ and $\eta \in (0,1)$,  $|\hat{\psi}_j(t)|>\varepsilon^{2\eta+1}$ when $t \sim \varepsilon^{-2(1-\eta|j|^{-2})}$, $|j| \lesssim (\log \varepsilon^{-1})^{1/4}$, and $j$ belongs to a certain subset of $j\mathbb{Z}^2$.  As the authors remark in \cite{CF12},  theirs  is an  example of {\it energy cascade} in the sense of \cite{C006}. Theorem  \ref{mainthm} above shows that  such a cascade is prevented when the torus is irrational. As we will demonstrate in Section \ref{sec.resonances}, this is mainly due to the fact that  in the irrational case the resonant set $\mathcal{R}$  in $\mathbb{Z}^2$ decouples as  $\mathcal{R}= \mathcal{R}_1 \cap \mathcal{R}_2$, where $\mathcal{R}_i, \, \, i=1,2,$ is  described  via a relationship involving only the $i$-th coordinates  of the vectors in $\mathcal{R}$. This suggests a decoupling mechanism that may set the 2D NLSE  we consider here in a framework that may be studied using inverse scattering theory, usually implemented for the cubic 1D NLSE, which is integrable.
\end{remark}

In order to prove Theorem \ref{mainthm}, we use $\omega$ as a perturbative parameter that changes the spectrum ($\lambda_k$) of the Laplace-Beltrami operator and allows us to control the resonant parts of the Hamiltonian.  We will compare the leftover resonances to the resonances in the case of the standard (rational) torus and show that the same type of energy cascades that occur there are not possible in our case, see Remark \ref{CF} above and the Appendix.   A result which is orthogonal to the one we prove here, but that also involves changing the spectrum of the Laplace operator can be found in  \cite{B96}. In fact there Bourgain shows that for fixed $m, \, s>1$, one can modify the Laplace operator of a nonlinear wave equation   to obtain a solution which Sobolev norm $H^s$ grows in time as $|t|^m$.

The blueprint of this paper starts in Section \ref{sec.primer} with a finite dimensional introduction to the most technical aspect of the argument: Birkhoff Normal Forms. The following section, Section \ref{sec.birkhoff},  extends the finite variable setting Birkhoff Normal Form transformation to the infinite dimensional Hamiltonian defined above in  \eqref{ham}. The beginning of this section is essentially the beginning of the proof of Theorem \ref{mainthm}, so it is useful to recall that the regularity and  the bound of the Fourier support are fixed. 
  After the infinite variable normal form section, we examine the resonant structure of the {\it normalized} Hamiltonian in Section \ref{sec.resonances}. The structure of the normalized Hamiltonian drives the dynamics of the system as shown in Section \ref{sec.dynamics}.  The final section, Section \ref{sec.proof}, contains the proof of Theorem \ref{mainthm}. The paper has also an appendix, Appendix \ref{app.A}, where  the result of this paper is compared with that in \cite{CKSTT10} and \cite{GK12}.

The contents of Sections \ref{sec.primer} and \ref{sec.birkhoff} can also  be found in the references \cite{Wig90}, \cite{CF12}, and \cite{BG}. Furthermore, Lemmas \ref{lemmaT} and \ref{nocasc} of Section \ref{sec.dynamics} have analogous  statements in \cite{CF12}.  Details are provided in this paper for completeness, and in order to serve as a reference for future work concerning a possible integrable structure for this 2D NLS model, similar to that of the  1D NLS, and as mentioned  in Remark \ref{CF}. 

	\section{Primer on Birkhoff Normal Forms}\label{sec.primer}

	Birkhoff Normal Forms is an infinite-dimensional normal form method derived from the normal form method for finite dimensional Hamiltonian systems. In this section we work out an example that should serve as a gentle introduction to the next section, in which we consider an infinite dimensional situation corresponding to the initial value problem \eqref{irrNLS}. The material in this section is standard and can be found in Chapter 19 of \cite{Wig90}.

Consider the symplectic manifold $(\mathcal{M}, \Omega)$ where $\mathcal{M} \cong \mathbb{C}^3 \hookrightarrow \mathbb{C}^6$ and $z \in \mathcal{M}$ is of the form 
	\begin{align*}
		z = \left(\begin{array}{c} z_1 \\ z_2 \\ z_3 \\ \bar{z}_1 \\ \bar{z}_2 \\ \bar{z}_3 
			\end{array} \right)
	\end{align*}
and the symplectic form $\Omega$ is the 2-form defined by 
	\begin{align*}
		\Omega(z, y) = \langle z, J y\rangle= i\sum_{j=1}^3 z_i \bar{y}_i -\bar{z}_iy_i
	\end{align*}
where $\langle \cdot, \cdot \rangle$ is a real dot-product and 
	\begin{align*}
		J= i \left( \begin{array}{cc} 0 & I_3\\ -I_3 & 0 \end{array} \right)	\quad\mbox{with } \quad 
		I_3= \left( \begin{array}{ccc} 1 & 0&0\\ 0&1 & 0\\ 0&0& 1 \end{array} \right).
	\end{align*}
For any two functions $f, g$ defined on $\mathcal{M}$ we define the Poisson bracket $\{\cdot, \cdot \}$ by
	\begin{align}\label{poisson}
		\{f, g\}:=  \Omega( \nabla_{z}f, \nabla_{z}g)= i\sum_{j=1}^3 \frac{\partial f}{\partial z_j} \frac{\partial g}{\partial \bar{z}_j} -\frac{\partial f}{\partial \bar{z}_j} \frac{\partial g}{\partial z_j}.
	\end{align}
Note that if $f$ is a $k$ degree homogeneous polynomial and $g$ is a $m$ degree homogeneous polynomial, then $\{f,g\}$ is a $k+m-2$ degree homogeneous polynomial.  

Consider the Hamiltonian 
	\begin{align}  \label{Hamtrunc}
		H(z)= \lambda_1|z_1|^2+\lambda_2|z_2|^2+\lambda_3|z_3|^2+ \sum_{\alpha_1+\alpha_2=\alpha_3+\alpha_4; \, \,  \alpha_i\in {1,2,3}} z_{\alpha_1}z_{\alpha_2}\bar{z}_{\alpha_3}\bar{z}_{\alpha_4},
	\end{align}
where $\lambda_i \in \mathbb{R}\setminus \{0\}$. The corresponding ODE is 
	\begin{align}\label{ode}
		\partial_t z = J\nabla_{z}H(z)=:X_H(z).
	\end{align}

We will perform a fourth order normalization of $H$.  Let $G_4$ be a general fourth order real-valued homogeneous polynomial in the variables $z_1, z_2,...,\bar{z}_3$
	\begin{align} \label{auxham}
		G_4(z)= \sum_{\beta \in \{\pm 1,\pm 2,\pm 3\}^4} g_{\beta} z_{\beta},
	\end{align}
where $z_{\beta}=\prod_{\beta_i>0}z_{\beta_i}\prod_{\beta_i<0}\bar{z}_{|\beta_i|}$. 

 Let $F$ be a polynomial in $z$ and define the adjoint function with respect to another polynomial $G$ by
	\begin{align}\label{ad}
		ad_G(F)= \{F, G\}.
	\end{align}
Moreover, let $ad_G^j= ad_G \circ ad^{j-1}_G$ and 
	\begin{align*}
		\exp(ad_G)=I+ad_G+\frac{1}{2!} ad^2_G+ \frac{1}{3!}ad^3_G+ \cdots
	\end{align*}
Putting $H$ into normal form will consist of plugging $H$ into the function $\exp(ad_{G_4})$ and choosing the values of $g_{\alpha}$ (from \eqref{auxham}) in order to simplify the new polynomial.  Why $\exp(\mbox{ad}_{G_4})H$ should still be a Hamiltonian and dynamically similar to $H$ will be shown later.  

With 
	\begin{align*}
		 H_0(z)=\lambda_1|z_1|^2+\lambda_2|z_2|^2+\lambda_3|z_3|^2 \hspace{.2cm} 
\mbox{ and }
		\hspace{.2cm} P(z)= \sum_{\alpha_1+\alpha_2=\alpha_3+\alpha_4} z_{\alpha_1}z_{\alpha_2}\bar{z}_{\alpha_3}\bar{z}_{\alpha_4}
	\end{align*} 
and $H$ as in \eqref{Hamtrunc},  we consider
	\begin{align}\label{expmap}
		\exp(ad_{G_4})H&=\exp(ad_{G_4})(H_0+P)\\
			&=(I+ad_{G_4}+\tfrac{1}{2!} ad^2_{G_4}+ \tfrac{1}{3!}ad^3_{G_4}+ \cdots)(H_0+P) \nonumber\\	
							&=H_0 +(P+ad_{G_4}H_0)+(ad_{G_4}P+\tfrac{1}{2!}ad^2_{G_4}H_0)\nonumber\\
			&\hspace{.5cm}+(\tfrac{1}{2!}ad^2_{G_4}P+\tfrac{1}{3!}ad^3_{G_4}H_0)+\cdots\nonumber
	\end{align}
and we see that each $ad_{G_4}^{k-1}P+ad_{G_4}^{k}H_0$ is of order $2(k+1)$ for $k=1, 2, 3,...$.  Since we are performing  a normal form transformation at order 4 we will focus on the term
	\begin{align} \label{homoeqn}
		P+ad_{G_4}H_0 = P + \{H_0, G_4\}.
	\end{align}
A reasonable initial goal would be to completely eliminate the term $P+ad_{G_4}H_0$ by our choice of the coefficients $g_{\beta}$.  We will see that the ability to eliminate any monomial $z_{\beta}$ will depend on the interplay between $\beta$ and $(\lambda_1, \lambda_2, \lambda_3)$.  Now for some computations, using  \eqref{ad} and \eqref{poisson}:
	\begin{align*}
		P+ad_{G_4}H_0 &=P+ \{H_0, G_4\}\\&= \sum_{\alpha_1+\alpha_2=\alpha_3+\alpha_4} z_{\alpha_1}z_{\alpha_2}\bar{z}_{\alpha_3}\bar{z}_{\alpha_4}+ \Omega( \nabla_{z,\bar{z}}H_0, \nabla_{z, \bar{z}}G_4)\\
			&= \sum_{\alpha_1+\alpha_2=\alpha_3+\alpha_4} z_{\alpha_1}z_{\alpha_2}\bar{z}_{\alpha_3}\bar{z}_{\alpha_4}  +  i\sum_{j=1}^3 \frac{\partial \lambda_j|z_j|^2}{\partial z_j} \frac{\partial G_4}{\partial \bar{z}_j} -\frac{\partial \lambda_j|z_j|^2}{\partial \bar{z}_j} \frac{\partial G_4}{\partial z_j}\\
			&= \sum_{\alpha_1+\alpha_2=\alpha_3+\alpha_4} z_{\alpha_1}z_{\alpha_2}\bar{z}_{\alpha_3}\bar{z}_{\alpha_4}  +  \sum_{\beta \in \{\pm 1,\pm 2,\pm 3\}^4} g_{\beta}  i\sum_{j=1}^3 \frac{\partial \lambda_j|z_j|^2}{\partial z_j} \frac{\partial z_{\beta}}{\partial \bar{z}_j} -\frac{\partial \lambda_j|z_j|^2}{\partial \bar{z}_j} \frac{\partial z_{\beta}}{\partial z_j}.
	\end{align*}
Then for each monomial, $z_{\beta}$ with $\beta=(\beta_1, \beta_2, \beta_3,\beta_4)$, we have the identity
	\begin{align*}
		\sum_{j=1}^3 \frac{\partial \lambda_j|z_j|^2}{\partial z_j} \frac{\partial z_{\beta}}{\partial \bar{z}_j} -\frac{\partial \lambda_j|z_j|^2}{\partial \bar{z}_j} \frac{\partial z_{\beta}}{\partial z_j}=\left(\sum_{\beta_i>0} \lambda_{\beta_i}-\sum_{\beta_i<0} \lambda_{|\beta_i|}\right)z_{\beta}.
	\end{align*}
Thus,
	\begin{align*}
		\sum_{\alpha_1+\alpha_2=\alpha_3+\alpha_4} z_{\alpha_1}z_{\alpha_2}\bar{z}_{\alpha_3}\bar{z}_{\alpha_4}  +  \sum_{\beta \in \{\pm 1,\pm 2,\pm 3\}^4} g_{\beta}  i\sum_{j=1}^3 \frac{\partial \lambda_j|z_j|^2}{\partial z_j} \frac{\partial z_{\beta}}{\partial \bar{z}_j} -\frac{\partial \lambda_j|z_j|^2}{\partial \bar{z}_j} \frac{\partial z_{\beta}}{\partial z_j}
\\=\sum_{\alpha_1+\alpha_2=\alpha_3+\alpha_4} z_{\alpha_1}z_{\alpha_2}\bar{z}_{\alpha_3}\bar{z}_{\alpha_4}  +  \sum_{\beta \in \{\pm 1,\pm 2,\pm 3\}^4} g_{\beta}  i\big(\sum_{\beta_i>0} \lambda_{\beta_i}-\sum_{\beta_i<0} \lambda_{|\beta_i|}\big)z_{\beta}.
	\end{align*}
If $\beta$ is such that $z_{\beta}$ does not appear as a term in $P$, i.e. $z_{\beta}$ is not of the form $z_{\alpha_1}z_{\alpha_2}\bar{z}_{\alpha_3}\bar{z}_{\alpha_4}$ where $\alpha_1+\alpha_2=\alpha_3+\alpha_4$, we let $g_{\beta}=0$.  Therefore,  
	\begin{align*}
		\sum_{\alpha_1+\alpha_2=\alpha_3+\alpha_4} z_{\alpha_1}z_{\alpha_2}\bar{z}_{\alpha_3}\bar{z}_{\alpha_4}  +  \sum_{\beta \in \{\pm 1,\pm 2,\pm 3\}^4} g_{\beta}  i\big(\sum_{\beta_i>0} \lambda_{\beta_i}-\sum_{\beta_i<0} \lambda_{|\beta_i|}\big)z_{\beta}\\
		=\sum_{\alpha_1+\alpha_2=\alpha_3+\alpha_4} z_{\alpha_1}z_{\alpha_2}\bar{z}_{\alpha_3}\bar{z}_{\alpha_4}  +  i g_{\alpha}  (\lambda_{\alpha_1}+\lambda_{\alpha_2}- \lambda_{\alpha_3}-\lambda_{\alpha_4})z_{\alpha_1}z_{\alpha_2}\bar{z}_{\alpha_3}\bar{z}_{\alpha_4}.
	\end{align*} 
	
By defining $g_{\alpha}= i (\lambda_{\alpha_1}+\lambda_{\alpha_2}- \lambda_{\alpha_3}-\lambda_{\alpha_4})^{-1}$ we can eliminate all fourth order terms from the new Hamiltonian.  However, $\lambda_{\alpha_1}+\lambda_{\alpha_2}- \lambda_{\alpha_3}-\lambda_{\alpha_4} \neq 0$ does not always hold which leads to the central difficulty of normal form theory.  Instances when $\lambda_{\alpha_1}+\lambda_{\alpha_2}- \lambda_{\alpha_3}-\lambda_{\alpha_4} = 0$ are known as {\bf\emph{resonances}}.  Working around these resonances makes up a large proportion of the literature on this subject.  

 For the moment, let's assume that the spectrum $\{\lambda_1, \lambda_2, \lambda_3\}$ is { \bf\emph{nonresonant at order four}}.  We will take this to mean that\footnote{This is equivalent to the standard definition of a nonresonance at order four satisfying the condition that for any $v=(v_1,v_2,v_3) \in \mathbb{Z}^3\setminus\{0\}$, with $|v_1|+|v_2|+|v_3|=4$, 
	\begin{align*}
		v_1\lambda_1+v_2\lambda_2+v_3\lambda_3 \neq 0.
	\end{align*}}
	\begin{align*}
		\lambda_{\alpha_1}+\lambda_{\alpha_2}- \lambda_{\alpha_3}-\lambda_{\alpha_4} \neq 0
	\end{align*}
whenever there aren't trivial  pairwise cancellations between the frequencies; i.e $\alpha_{1}=\alpha_{3}$ and $\alpha_2=\alpha_4$, or  $\alpha_{1}=\alpha_{4}$ and $\alpha_2=\alpha_3$.
%
Consequently, assuming nonresonance at order four implies that $(\alpha_1, \alpha_2, \alpha_3, \alpha_4)$ is a resonance if and only if  $\alpha_{1}=\alpha_{3}$ and $\alpha_2=\alpha_4$, or  $\alpha_{1}=\alpha_{4}$ and $\alpha_2=\alpha_3$. 

 Assuming $\{\lambda_1, \lambda_2, \lambda_3\}$ is nonresonant at order four, we let $g_{\alpha}= i (\lambda_{\alpha_1}+\lambda_{\alpha_2}- \lambda_{\alpha_3}-\lambda_{\alpha_4})^{-1}$ when $\lambda_{\alpha_1}+\lambda_{\alpha_2}- \lambda_{\alpha_3}-\lambda_{\alpha_4} \neq 0$ and we obtain
	\begin{align*}
		&\sum_{\alpha_1+\alpha_2=\alpha_3+\alpha_4} z_{\alpha_1}z_{\alpha_2}\bar{z}_{\alpha_3}\bar{z}_{\alpha_4}  +  i g_{\alpha}  (\lambda_{\alpha_1}+\lambda_{\alpha_2}- \lambda_{\alpha_3}-\lambda_{\alpha_4})z_{\alpha_1}z_{\alpha_2}\bar{z}_{\alpha_3}\bar{z}_{\alpha_4}\\
		&=\sum_{\alpha_1+\alpha_2=\alpha_3+\alpha_4 \atop \lambda_{\alpha_1}+\lambda_{\alpha_2}- \lambda_{\alpha_3}-\lambda_{\alpha_4} \neq 0} z_{\alpha_1}z_{\alpha_2}\bar{z}_{\alpha_3}\bar{z}_{\alpha_4}  -z_{\alpha_1}z_{\alpha_2}\bar{z}_{\alpha_3}\bar{z}_{\alpha_4}+ \sum_{\alpha_1+\alpha_2=\alpha_3+\alpha_4 \atop \alpha_{1}=\alpha_{3}, \alpha_2=\alpha_4}z_{\alpha_1}z_{\alpha_2}\bar{z}_{\alpha_3}\bar{z}_{\alpha_4}.
	\end{align*}
Therefore,
	\begin{align*}
		P+ \{H_0, G_4\}&= \sum_{\alpha_1, \alpha_2} |z_{\alpha_1}z_{\alpha_2}|^2 \sim \left(\sum_{i=1}^3 |z_{i}|^2 \right)^2.
	\end{align*}

Our new Hamiltonian is 
	\begin{align*}
		\exp(ad_{G_4})H&=H_0+ \left(\sum_{i=1}^3 |z_{i}|^2 \right)^2+ F_6+ F_8+\cdots\\
				&=H_0+ \|z\|^4+ F_6+ F_8+\cdots,
	\end{align*}
where $F_{2(k+1)}=ad_{G_4}^{k-1}P+ad_{G_4}^{k}H_0$.  We note that if we were to be working with the equivalent Wick ordered version of \eqref{Hamtrunc},  see for example \cite{BGibbs},
	\begin{align*}
		H(z)= \lambda_1|z_1|^2+\lambda_2|z_2|^2+\lambda_3|z_3|^2+ \sum_{\alpha_1+\alpha_2=\alpha_3+\alpha_4} z_{\alpha_1}z_{\alpha_2}\bar{z}_{\alpha_3}\bar{z}_{\alpha_4} - \|z\|^4
	\end{align*}
we can completely normalize the fourth degree monomial.

	\subsubsection{The Change of Variables is a symplectomorphism}\label{subsubsec.variables}
Now, we face the details of the symplectic transformation that is $\exp(ad_{G_4})$.
	\begin{lemma} Let $G$ be a Hamiltonian defined on $\mathcal{M}$, and let $\phi_t:\mathcal{M} \rightarrow \mathcal{M}$ be the time $t$ flow map for $G$ as in \eqref{ode}.
		Let $F:\mathcal{M} \rightarrow \mathbb{R}$ be a regular function.  Then 
			\begin{align*}
				\tfrac{d}{dt}(F \circ \phi_t) = \{F, G\} \circ \phi_t.
			\end{align*}
	\end{lemma}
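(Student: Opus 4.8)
The plan is to prove the identity by a direct chain-rule computation, unwinding the definition of the Hamiltonian flow in \eqref{ode} and the definition of the Poisson bracket in \eqref{poisson}. There is no analytic subtlety here: the only genuine content is bookkeeping with the conventions in the variables $(z_1,z_2,z_3,\bar z_1,\bar z_2,\bar z_3)$ and the factors of $i$ carried by the matrix $J$. I will therefore treat $F$ and $G$ as smooth, and rely on existence and uniqueness of the flow $\phi_t$ of the autonomous system \eqref{ode} generated by $G$.

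First I would fix $z\in\mathcal{M}$ and set $w(t):=\phi_t(z)$, so that by definition of the flow map $\partial_t w(t)=X_G(w(t))=J\nabla_z G(w(t))$. Expanding $J$ through its block form $J=i\left(\begin{smallmatrix}0 & I_3\\ -I_3 & 0\end{smallmatrix}\right)$ and recalling $\nabla_z G=(\partial_{z_j}G,\partial_{\bar z_j}G)_j$, this unpacks componentwise into $\partial_t z_j=i\,\partial_{\bar z_j}G$ and $\partial_t\bar z_j=-i\,\partial_{z_j}G$, each evaluated along $w(t)$. Applying the chain rule to $t\mapsto F(w(t))$ and differentiating through all six coordinates then gives
\begin{align*}
\frac{d}{dt}\big(F\circ\phi_t\big)(z)=\sum_{j=1}^3\frac{\partial F}{\partial z_j}\,\partial_t z_j+\frac{\partial F}{\partial \bar z_j}\,\partial_t \bar z_j,
\end{align*}
where the partials of $F$ are evaluated at $w(t)$. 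Substituting the two velocity expressions from the previous step yields
\begin{align*}
\frac{d}{dt}\big(F\circ\phi_t\big)(z)=i\sum_{j=1}^3\frac{\partial F}{\partial z_j}\frac{\partial G}{\partial \bar z_j}-\frac{\partial F}{\partial \bar z_j}\frac{\partial G}{\partial z_j},
\end{align*}
again evaluated at $w(t)=\phi_t(z)$. Comparing with \eqref{poisson}, the right-hand side is exactly $\{F,G\}(\phi_t(z))=\big(\{F,G\}\circ\phi_t\big)(z)$, which is the asserted identity.

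The step I would watch most carefully is matching signs and factors of $i$ when converting $J\nabla_z G$ into the componentwise velocities: it is precisely the off-diagonal structure of $J$ that routes the $z_j$-velocity to the $\bar z_j$-derivative of $G$ and the $\bar z_j$-velocity to the $z_j$-derivative (with the opposite sign), and this swap is what reproduces the antisymmetric pairing in \eqref{poisson}. Once that identification is pinned down the conclusion is immediate. I note finally that the computation never uses that $F$ is real-valued, only that it is differentiable, so the same identity applies to the complex-valued homogeneous polynomials $G_4$ and $z_\beta$ that appear in the normalization, which is what makes the formula $\tfrac{d}{dt}\exp(\mathrm{ad}_{G_4})$ legitimate in the sequel.
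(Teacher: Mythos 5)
Your proof is correct and follows essentially the same route as the paper: apply the chain rule along the flow, substitute $\partial_t\phi_t = J\nabla G(\phi_t)$ from \eqref{ode}, and recognize the resulting expression as the Poisson bracket \eqref{poisson}. The paper states this in three lines of vector notation ($\nabla F \cdot J\nabla G = \{F,G\}$), while you unpack $J$ componentwise, but the argument is identical in substance.
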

	\begin{proof}
		\begin{align*}
			\frac{d}{dt}( F \circ \phi_t )(z) &= \nabla F(\phi_t(z)) \cdot \frac{d}{dt}\phi_t(z)\\
				&= \nabla F(\phi_t(z)) \cdot J\nabla G(\phi_t(z))\\
				&= \{F, G\}(\phi_t(z)).
		\end{align*}
	\end{proof}

	\begin{lemma}
		Let $G$ be a Hamiltonian defined on $\mathcal{M}$, and let $\phi_t:\mathcal{M} \rightarrow \mathcal{M}$ be the time $t$ flow map for $G$ as in \eqref{ode}.  Then 
			\begin{align*}
				\exp(ad_G)F=F \circ \phi_1
			\end{align*}
for any function $F:\mathcal{M} \rightarrow \mathbb{R}$.
	\end{lemma}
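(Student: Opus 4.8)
The plan is to identify $\exp(ad_G)F$ with the Taylor expansion in $t$ of the scalar function $t \mapsto F \circ \phi_t$, evaluated at $t = 1$. The engine of the argument is the preceding lemma, which I would iterate to compute all the higher-order time derivatives of $F \circ \phi_t$.

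First I would prove by induction on $k$ that
\begin{align*}
    \frac{d^k}{dt^k}\big(F \circ \phi_t\big) = \big(ad_G^k F\big) \circ \phi_t
\end{align*}
for every $k \geq 0$. The case $k = 0$ is trivial, and $k = 1$ is precisely the preceding lemma. For the inductive step I would apply that lemma with $ad_G^k F$ in place of $F$ --- it is again a regular function on $\mathcal{M}$ --- to get $\frac{d}{dt}\big((ad_G^k F) \circ \phi_t\big) = \{ad_G^k F, G\} \circ \phi_t = (ad_G^{k+1} F) \circ \phi_t$, and then commute the outer derivative with the inductive hypothesis. Next I would evaluate at $t = 0$: since $\phi_0 = \mathrm{id}_{\mathcal{M}}$, the identity above yields $\frac{d^k}{dt^k}(F \circ \phi_t)\big|_{t=0} = ad_G^k F$. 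Substituting these derivatives into the Taylor series of the scalar map $t \mapsto F(\phi_t(z))$ about $t = 0$ gives
\begin{align*}
    F(\phi_t(z)) = \sum_{k = 0}^{\infty} \frac{t^k}{k!}\, \big(ad_G^k F\big)(z) = \big(\exp(t\, ad_G) F\big)(z),
\end{align*}
and specializing to $t = 1$ is exactly the claim.

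The step I expect to be the main obstacle is justifying that this series genuinely converges to $F \circ \phi_t$, rather than being merely its formal or asymptotic expansion. Here I would use that $G$ is a polynomial Hamiltonian, so that $X_G = J \nabla G$ is a real-analytic (in fact polynomial) vector field; by the standard analytic dependence theory for ODEs, the flow $\phi_t(z)$ is then real-analytic jointly in $(t, z)$ on its domain of existence, and in particular $t \mapsto F(\phi_t(z))$ is real-analytic near $t = 0$ with the Taylor series displayed above. Since the bracket of a homogeneous polynomial of degree $m$ with the degree-four polynomial $G$ has degree $m + 2$, each $ad_G^k F$ is homogeneous of degree growing linearly in $k$, so the series converges on a neighborhood of the origin in $\mathcal{M}$ --- precisely the regime in which the perturbative normal form is constructed. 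One then guarantees existence of the flow up to $t = 1$ by restricting to a sufficiently small ball, or equivalently reads the identity as one between formal power series, which is all that is needed to validate the term-by-term computation of $\exp(ad_{G_4})H$ carried out above.
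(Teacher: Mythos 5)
Your proposal is correct and follows essentially the same route as the paper: Taylor-expand $g(t) = F \circ \phi_t$ in $t$, iterate the preceding lemma to identify $g^{(k)}(0) = ad_G^k F$, and evaluate at $t=1$. You additionally make the induction explicit and address convergence of the series (which the paper's terse proof leaves implicit), but the underlying argument is identical.
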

	\begin{proof}
		Let $g(t)= F \circ \phi_t$. Then $g(t) = g(0) + g'(0)t+\frac{1}{2!}g''(0)t^2+\cdots$ and by a symplectic identity
			\begin{align*}
				g'(t) = \frac{d}{dt} (F\circ \phi_t) =  \{F, G\} \circ \phi_t = ad_G(F) \circ \phi_t
			\end{align*}
		and thus 
			\begin{align*}
				g(1)= Id+ad_G(F)+ \tfrac{1}{2!}ad^2_G(F)+ \cdots=\exp(ad_G)(F),
			\end{align*}
 		since the dynamics of Hamiltonians preserve the symplectic structure, so does $\exp(ad_G)(F)$.
	\end{proof}

%
%
%

	\section{Birkhoff Normal Forms}\label{sec.birkhoff}
	
	 In this section, we return to our original infinite dimensional Hamiltonian \eqref{ham}
	\begin{align*}
		H( \hat{\psi}) &= \tfrac{1}{2} \sum_{k \in \mathbb{Z}^d} \lambda_k |\hat{\psi}_k|^2 + \tfrac{1}{4}\sum_{ k_1+k_2= h_1+h_2} \hat{\psi}_{k_1} \hat{\psi}_{k_2}\bar{\hat{\psi}}_{h_1}\bar{\hat{\psi}}_{h_2}\\
				&=:H_0 + P.
	\end{align*}
  Assume that $\omega= (\omega_1, \omega_2) \in \mathbb{R}^2_{+}$ is irrational ($\omega \cdot m \neq 0, \forall m \in \mathbb{Z}^2$).   Then, for any fixed $N>0$, the subset of frequencies  $(\lambda_k)_{k \in \mathbb{Z}^2, |k|\leq N}=(\omega_1|k^{(1)}|^2+\omega_2|k^{(2)}|^2)_{k \in \mathbb{Z}^2}$ satisfies the following nonresonance condition:

	\begin{align}\label{nonres}
		 \lambda_{k_1}+\lambda_{k_2}-\lambda_{k_3}-\lambda_{k_4} \neq 0.
	\end{align}
We  can thus characterize the resonances $(k_1, k_2, k_3, k_4)$ satisfying
	\begin{align*}
		 \lambda_{k_1}+\lambda_{k_2}-\lambda_{k_3}-\lambda_{k_4} = 0
	\end{align*}
by the condition
	\begin{align*}
		|k^{(1)}_{1}|^2+|k^{(1)}_{2}|^2-|k^{(1)}_{3}|^2-|k^{(1)}_{4}|^2 &= 0 \\\mbox{ and } \quad |k^{(2)}_{1}|^2+|k^{(2)}_{2}|^2-|k^{(2)}_{3}|^2-|k^{(2)}_{4}|^2  &= 0.
	\end{align*}

	We will also consider what we will call ``weak'' resonances:  4-tuples, $(k_1, k_2, k_3, k_4)$, satisfying
	\begin{align*}
		 |\lambda_{k_1}+\lambda_{k_2}-\lambda_{k_3}-\lambda_{k_4}| \leq 1
	\end{align*}
	
	We will go back to this resonant sets later, now we need to establish the functional setting,  with the frequency space definition of the $L^2$ Sobolev norm.

\begin{definition}
For $x=\{x_n\}_{n \in \mathbb{Z}^d}$ and $s \in (0, \infty)$, define the standard Sobolev norm as
\begin{align*}
\|x\|_s := \sqrt{ \sum_{n\in \mathbb{Z}^d}|x_n|^2 \langle n \rangle^{2s}   } 
\end{align*}
where $\langle n \rangle:= \sqrt{|n|^2+1}$.  Define $h^s(\mathbb{Z}^d)$ as
\begin{align*}
h^s:=h^s(\mathbb{Z}^d):= \left\{ x= \{x_n\}_{n\in \mathbb{Z}^d} \,: \, \|x\|_s<\infty  \right\}.
\end{align*}
\end{definition}

Furthermore, for $R>0$, $s \in (0, \infty]$, define
	\begin{align*}
		B_s(R):=\{x\in h^s:\|x\|_s \leq R\}.
	\end{align*}
	
Generalizing equation \eqref{ode}, for any Hamiltonian $H$, we let its corresponding Hamiltonian vector field be defined as 
	\begin{align*}
		X_H(z):=J \left( \begin{array}{c} \partial_{z}H(z)\\  \partial_{\bar{z}} H(z) \end{array} \right)
	\end{align*}
where
	\begin{align*}
		J= i \left( \begin{array}{cc} 0 & Id\\ -Id & 0 \end{array} \right).
	\end{align*}

 The following is a well-known lemma that appears throughout the literature in regards to normal forms (Lemma \ref{ident} appears as Remark 4.15 in \cite{BG}.).  We need this result to ensure that our normal form change of variables preserves dynamical properties of the original Hamiltonian system.  Lemma \ref{ident} is used to justify inequality \eqref{identest} in the Birkhoff Normal Form lemma below, and follows directly from Duhamel's formula. 
\begin{lemma}\label{ident}
		Let $\chi$ be an analytic function with Hamiltonian vector field $X_\chi$ which is analytic as a map from $B_s(R)$ to $h^s$; fix $C < R$.  Assume that $\sup_{\|z\|_s \leq R}\|X_{\chi}(z)\|_{s} < C$, and consider the time $t$ flow $T^t$ of $X_{\chi}$.  Then, for $|t|\leq 1$, one has 
			\begin{align*}
				\sup_{\|z\|_s \leq R-C} \|T^t(z)-z\|_s \leq \sup_{\|z\|_s \leq R}\|X_{\chi}(z)\|_{s}.
			\end{align*}
	\end{lemma}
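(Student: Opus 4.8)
The plan is to express the flow through its integral (Duhamel) form and then run a continuity/bootstrap argument that keeps the trajectory inside the ball $B_s(R)$, where the hypothesis on $X_\chi$ is available. Write $M:=\sup_{\|z\|_s\le R}\|X_\chi(z)\|_s$, so that by assumption $M<C$. Since $X_\chi$ is analytic on $B_s(R)$ it is locally Lipschitz there, and the Cauchy--Lipschitz (Picard--Lindel\"of) theorem produces a unique local solution $t\mapsto T^t(z)$ of $\frac{d}{dt}T^t(z)=X_\chi(T^t(z))$ with $T^0(z)=z$. Integrating this ODE gives the Duhamel identity
	\begin{align*}
		T^t(z)-z=\int_0^t X_\chi(T^\tau(z))\,d\tau,
	\end{align*}
valid as long as the trajectory remains in $B_s(R)$.

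Fix $z$ with $\|z\|_s\le R-C$ and, treating $t\ge 0$ first, set $t^\ast:=\sup\{t\in[0,1]:\ \|T^\tau(z)\|_s\le R\ \text{for all }\tau\in[0,t]\}$; this set is nonempty since $\|z\|_s\le R-C\le R$, and $t^\ast>0$ by local existence and continuity. On $[0,t^\ast]$ the trajectory lies in $B_s(R)$, so $\|X_\chi(T^\tau(z))\|_s\le M$ and the Duhamel identity yields, for $t\le t^\ast$,
	\begin{align*}
		\|T^t(z)-z\|_s\le\int_0^t\|X_\chi(T^\tau(z))\|_s\,d\tau\le M\,t\le M.
	\end{align*}
Consequently $\|T^t(z)\|_s\le\|z\|_s+M\le(R-C)+M<R$, the final strict inequality being exactly where $M<C$ is used. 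Thus the trajectory stays strictly inside $B_s(R)$ up to time $t^\ast$; by continuity of $t\mapsto\|T^t(z)\|_s$ this forces $t^\ast=1$ (otherwise $T^{t^\ast}(z)$ is an interior point of $B_s(R)$, so the solution continues past $t^\ast$ while remaining in $B_s(R)$, contradicting maximality). The negative-time case $t\in[-1,0]$ is identical after replacing $\int_0^t$ by $-\int_t^0$.

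Therefore for every $\|z\|_s\le R-C$ and every $|t|\le 1$ we have $\|T^t(z)-z\|_s\le M|t|\le M$, and taking the supremum over such $z$ gives
	\begin{align*}
		\sup_{\|z\|_s\le R-C}\|T^t(z)-z\|_s\le M=\sup_{\|z\|_s\le R}\|X_\chi(z)\|_s,
	\end{align*}
which is the claim. The only real subtlety, and hence the step I would handle most carefully, is the bootstrap preventing the flow from escaping $B_s(R)$: one must verify that the displacement bound $M$ is smaller than the safety margin $C$ between the starting ball $B_s(R-C)$ and $B_s(R)$, so that the a priori bound on $\|X_\chi\|_s$ is never invalidated along the trajectory. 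Everything else is a direct integration of Duhamel's formula.
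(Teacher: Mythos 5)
Your proof is correct and follows exactly the route the paper indicates: the paper gives no details for this lemma, stating only that it "follows directly from Duhamel's formula" (citing Remark 4.15 of Bambusi--Gr\'ebert), which is precisely the integral identity plus bootstrap you carry out. Your write-up, including the continuity argument showing the margin $C-M>0$ keeps the trajectory inside $B_s(R)$ up to time $1$, is a faithful and complete filling-in of that one-line justification.
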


Now we define a cut-off for $z \in h^s$, $z=(z_k)_{k\in \mathbb{Z}^d}$, at height $N$.
For a positive integer $N$, let 
	\begin{align*}
		z^{l}:= \left\{ \begin{array}{lc} z_k & |k|<N\\ 0& |k|\geq N\end{array}\right.\\
		z^h:= \left\{ \begin{array}{lc} z_k & |k|\geq N\\ 0& |k|< N.\end{array}\right.
	\end{align*}
	
	We finally have all the elements to present the infinite dimensional analogue of the Birkhoff Normal Form transformation.  Other infinite-dimensional analogues that require stronger nonresonance conditions on the frequencies $(\lambda_k)_{k \in \mathbb{Z}^2}$ can be found in \cite{BG}.
	\begin{lemma}[Birkhoff Normal Form of Order 4] \label{iterlemma}
Consider the Hamiltonian $H=H_0+P$ defined in \eqref{ham}. Fix $s \in (0, \infty]$.  There exists a positive $R_0\ll1$ such that  for any $N>1$, there exists $A>1$ and an analytic canonical transformation
\begin{align*}
\mathscr{T}: B_s\Big( \tfrac{R_0}{A}\Big) \rightarrow h^s
\end{align*}
which transforms $H$ into
\begin{align*}
\tilde{H}:= H \circ \mathscr{T}= H_0 + \mathscr{L}+ \mathscr{U}+\mathscr{R} .
\end{align*}
 For any $R < R_0A^{-1}$, there exists a constant $C_{s, N,\omega}$ such that the following properties are fulfilled:
\begin{enumerate}
\item  The transformation $\mathscr{T}$ satisfies
	\begin{align}\label{identest}
	\sup_{z \in B_s(R)} \|z-\mathscr{T}(z)\|_s \leq C_{s, N, \omega}R^3;
	\end{align}   
\item $\mathscr{L}$ is a homogeneous polynomial of degree $4$; it is resonant (in normal form, $\{H_0,\mathscr{L}\}=0$) ; $\mathscr{U}$ is a homogeneous polynomial of degree 4 and has a zero of order 1 at the origin in terms of the $z^h$ variable; $\mathscr{R}$ has a zero of order 6 at the origin.
\end{enumerate}

Finally, the canonical transformation is a symplectomorphism from $B_s\Big( \tfrac{R_0}{A}\Big)$ into a neighborhood of the origin of $h^s$ for which the same estimate, \eqref{identest}, is fulfilled by the inverse canonical transformation.
\end{lemma}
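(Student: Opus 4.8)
The plan is to construct $\mathscr{T}$ as the time-$1$ flow $T^1$ of an auxiliary degree-$4$ Hamiltonian $\chi$, exactly as in the finite-dimensional primer of Section \ref{sec.primer}, solving the homological equation only for the interactions among the low modes $|k|<N$, where the small divisors are controlled. First I would fix the cutoff $N$ and split the quartic $P$ according to the decomposition $z=z^l+z^h$ into the part $P^{ll}$ all of whose four indices satisfy $|k_i|<N$, and the complementary part, every monomial of which carries at least one index $|k_i|\geq N$; this complementary part will become $\mathscr{U}$, since it vanishes to order $1$ in $z^h$ and is left untouched by the construction. Within $P^{ll}$ I would isolate the resonant monomials, those with $\lambda_{k_1}+\lambda_{k_2}-\lambda_{k_3}-\lambda_{k_4}=0$, which by the characterization preceding \eqref{nonres} form the normal-form term $\mathscr{L}$ satisfying $\{H_0,\mathscr{L}\}=0$; and I would define $\chi$ as a degree-$4$ homogeneous polynomial supported on the nonresonant low monomials, with coefficients obtained by dividing the corresponding coefficient of $P$ by $i(\lambda_{k_1}+\lambda_{k_2}-\lambda_{k_3}-\lambda_{k_4})$, exactly as $g_\alpha$ was chosen in Section \ref{sec.primer}. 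The identity $\{H_0,z_\beta\}=i(\sum_{\beta_i>0}\lambda_{\beta_i}-\sum_{\beta_i<0}\lambda_{|\beta_i|})z_\beta$ then gives $P+\{H_0,\chi\}=\mathscr{L}+\mathscr{U}$ at degree $4$, since $\{H_0,\chi\}$ is supported on the same all-low tuples as $\chi$ and exactly cancels the nonresonant all-low part of $P$.

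The crucial quantitative input is that, because $\chi$ is supported on the finite set $\{|k_i|<N\}$ and $\omega$ is irrational, the nonzero divisors are bounded below by a positive constant $\delta_{N,\omega}=\min\{|\lambda_{k_1}+\lambda_{k_2}-\lambda_{k_3}-\lambda_{k_4}|\neq 0:\ |k_i|<N\}$; this replaces the Diophantine/second-Melnikov hypotheses of general Birkhoff theory and is exactly where the finiteness of $N$ and the irrationality of $\omega$ (through \eqref{nonres}) enter. Using $\delta_{N,\omega}^{-1}$ together with the multilinear (tame) estimate for momentum-conserving polynomials on $h^s$, I would prove the vector-field bound $\sup_{\|z\|_s\leq R}\|X_\chi(z)\|_s\leq C_{s,N,\omega}R^3$, with the constant absorbing $\delta_{N,\omega}^{-1}$ and the number of low tuples. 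Feeding this into Lemma \ref{ident} with $t=1$ then yields that the flow is well defined on $B_s(R_0/A)$ once $A$ is chosen so that $C_{s,N,\omega}R^3<R$ holds throughout the ball, and gives estimate \eqref{identest}; the choice of $A=A(s,N,\omega)$ carries all of the $N$-dependence while $R_0$ stays absolute.

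Next I would compute $\tilde H = H\circ\mathscr{T}=\exp(ad_\chi)H$ by the second lemma of Section \ref{sec.primer}, extended to the present infinite-dimensional setting, and collect terms by homogeneity. The degree-$4$ part is $P+\{H_0,\chi\}=\mathscr{L}+\mathscr{U}$ as above, while the remainder $\mathscr{R}=\sum_{k\geq 2}\big(\tfrac{1}{k!}ad_\chi^k H_0+\tfrac{1}{(k-1)!}ad_\chi^{k-1}P\big)$ collects the iterated brackets, each of degree $\geq 6$, so that $\mathscr{R}$ has a zero of order $6$ at the origin by pure degree counting. The analytic heart of the argument is to show this Lie series converges in $h^s$ on $B_s(R_0/A)$: iterating a bracket estimate of the form $\|X_{\{F,\chi\}}(z)\|_s\lesssim C_{s,N,\omega}\|z\|_s^2\,\|X_F(z)\|_s$, the $k$-th term is controlled by factors like $C^{k}\|z\|_s^{2k+1}/k!$, and the factorial beats the geometric growth, so the series and its vector field converge absolutely for $\|z\|_s\leq R_0/A$ after possibly enlarging $A$.

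Finally, $\mathscr{T}=T^1$ is analytic (being the time-one map of the polynomial, hence analytic, field $X_\chi$) and a symplectomorphism because it is the time-one flow of a Hamiltonian vector field; its inverse is $T^{-1}$, to which Lemma \ref{ident} applies verbatim with $t=-1$, furnishing the same estimate \eqref{identest} for $\mathscr{T}^{-1}$. The step I expect to be the main obstacle is the convergence and estimation of this Lie series in the $h^s$ topology: one must establish the momentum-conserving multilinear estimates that render $X_P$ and each $ad_\chi^j$ bounded operations on $h^s$, and track the constants so that a single absolute $R_0$ works for every $N$ once $A=A(s,N,\omega)$ is taken large enough. By contrast, the small-divisor difficulty that dominates classical normal-form theory is here rendered harmless by the cutoff at $N$, at the cost of retaining the high-frequency quartic term $\mathscr{U}$ rather than removing it.
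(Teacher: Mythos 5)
Your construction matches the paper's in its skeleton: a Lie transform generated by a quartic polynomial, small divisors controlled by the finiteness of the low-frequency tuples together with irrationality \eqref{nonres}, the vector-field bound $\|X_\chi(z)\|_s\leq C_{s,N,\omega}\|z\|_s^3$ feeding into a contraction/flow argument and Lemma \ref{ident}, and the symplectomorphism property coming for free because $\mathscr{T}$ is a time-one Hamiltonian flow. But there is one concrete divergence, and it matters: you take $\chi$ supported only on the nonresonant all-low tuples and leave the entire high-frequency part of $P$ untouched as $\mathscr{U}$. The paper's generator is $\chi=\chi_1+\chi_2$, where $\chi_1$ is your polynomial and $\chi_2$ is a second, infinitely-many-variable polynomial that eliminates every tuple containing a high index that is not \emph{weakly} resonant, i.e.\ every tuple with $|\lambda_{k_1}+\lambda_{k_2}-\lambda_{k_3}-\lambda_{k_4}|>1$. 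Consequently the paper's $\mathscr{U}$ consists only of weakly resonant high-frequency tuples, while yours consists of all of them. Note that $\chi_2$ costs nothing in the small-divisor bound: its divisors are at least $1$ by construction, which is exactly why the paper defines $C_{N,\omega}$ as a maximum with $1$; so your worry that normalizing infinitely many tuples requires a strong nonresonance condition is only correct if one tries to remove \emph{all} nonresonant high tuples, not the non-weakly-resonant ones.

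Measured against the literal statement of Lemma \ref{iterlemma}, your construction suffices: your $\mathscr{L}$, $\mathscr{U}$, $\mathscr{R}$ satisfy every stated property. But the lemma is invoked later in its stronger form. Equation \eqref{newNLS} writes $\mathscr{U}$ as $\sum^*$, a sum restricted by the weak resonance condition, and the proof of Lemma \ref{lemmaT} explicitly uses ``$|\Lambda_{k,k_1,k_2,k_3}|<1$'' to bound the factors $\Lambda^{\alpha_1}_{k,k_1,k_2,k_3}$ that arise when time derivatives fall on the oscillating phases; this is what keeps the $n$-th Taylor coefficients growing like $CR^n n!$ with constants independent of the frequencies. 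With your $\mathscr{U}$ the phases $\lambda_{k_1}+\lambda_{k_2}-\lambda_{k_3}-\lambda_k$ are unbounded over the high tuples, the factors $|\Lambda_{k,k_1,k_2,k_3}|^{\alpha_1}$ destroy the uniform factorial estimate, and the analyticity time in Lemma \ref{lemmaT} collapses. So the missing idea is the second generator $\chi_2$: the normal form must remove the non-weakly-resonant high-frequency interactions, not merely the nonresonant low-frequency ones. A smaller, offsetting remark in your favor: you correctly flag convergence of the Lie series for $\tilde H$ as a step needing proof, a point the paper largely sidesteps by defining $\mathscr{T}$ dynamically as the time-one flow and only proving well-posedness of that flow.
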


\begin{proof}


Consider the Taylor expansion of $P$ in terms of the $z^h$ variables:
	\begin{align*}
		P= P_0+P_1+ P_2+P_3 + P_4.
	\end{align*}
Note that $P_0$ is defined only in terms of $z^{l}$, and thus $P_0$ is a polynomial in only finitely many variables. The real-valued auxiliary function generating the change of variables will be a sum of two polynomials: one polynomial in finitely many variables designed to eliminate all resonant tuples and one polynomial in infinitely many variables meant to remove all non-``weak'' resonant tuples. If the auxiliary function were to be a polynomial of infinitely many variables and eliminated all resonances, it would be impossible to establish the convergence of $\mathscr{T}$ without a strong nonresonance condition on the linear spectrum $(\lambda_k)_{k \in \mathbb{Z}^2}$ Therefore, we have established the weak form of resonance for the infinite variable polynomial. Now consider the truncated Hamiltonian
	\begin{align}\label{truncated}
		H_0+P_0.
	\end{align}

As typical, we are looking for a Lie transform, $\mathscr{T}$, that eliminates the nonresonant part of order $4$ in expression \eqref{truncated}.  Let $\chi=\chi_1+\chi_2$ be the homogeneous degree $4$, real-valued polynomial generating $\mathscr{T}$.  Then, as in the finite variable case \eqref{expmap}, we obtain:
	\begin{align*}
		(H_0+P) \circ \mathscr{T}= H_0&+\{\chi_1+\chi_2, H_0\}  +P_0+P_1+P_2+P_3+P_4\\
			&+\sum_{m\geq 1} \frac{1}{m!}\ad^m_{\chi}(P)+\sum_{m\geq 2} \frac{1}{m!}\ad^m_{\chi}(H_0)
	\end{align*}
and the  order four terms of $(H_0+P) \circ \mathscr{T}$ as $\{\chi, H_0\} + P$.  As in the finite dimensional case, we choose $\chi_1$ so that the only summands in the homogeneous polynomial $\{\chi_1, H_0\} +P_0$ correspond to resonant tuples of indexes.  We define
	\begin{align*}
		\mathscr{L}&:=\{\chi_1, H_0\} + P_0
	\end{align*}
	Also, we choose $\chi_2$ so that the only summands in the homogeneous polynomial $\{\chi_2, H_0\} +P_1+P_2+P_3+P_4$ correspond to ``weakly'' resonant tuples of indexes. We define
	\begin{align*}
		\mathscr{U}&:=\{\chi_2, H_0\} +P_1+P_2+P_3+P_4
	\end{align*}
We will use the same Lie transform for the complete Hamiltonian $H$:
	\begin{align*}
		H\circ \mathscr{T}=(H_0&+P_0+ P_1+\cdots + P_4) \circ \mathscr{T}\\
			= H_0&+\mathscr{L}+\mathscr{U}+\sum_{m\geq 1} \frac{1}{m!}\ad^m_{\chi}(P)+\sum_{m\geq 2} \frac{1}{m!}\ad^m_{\chi}(H_0).
	\end{align*}
Finally, we organize the remaining parts of the new Hamiltonian as
	\begin{align*}
		\mathscr{R} &:= \sum_{m\geq 1} \frac{1}{m!} \ad^m_{\chi}(P)+\sum_{m\geq 2} \frac{1}{m!} \ad^m_{\chi_{\ell}}(H_0).
	\end{align*}
This completes the formal argument of the proof.  

 The constant $A>1$, from the statement,  is the distortion of the Lie transform given by first computing the supremum of all divisors
	\begin{align*}
		C_{N, \omega} := \max\left( \sup \frac{1}{|\lambda_{k_1}+\lambda_{k_2}-\lambda_{k_3}-\lambda_{k_4}|}, 1\right)
	\end{align*}
where the supremum is taken over all $(k_1, k_2, k_3, k_4)$ that correspond to monomials in $\chi(z) = \sum z_{k_1}z_{k_2}\bar{z}_{k_3}\bar{z}_{k_4}$.  These 4-tuples are exactly those satisfying $|k_i|\leq N$ and are nonresonant in the sense of \eqref{nonres}. Therefore, there are only finitely many such 4-tuples, and we can ensure that $C_{N, \omega}< \infty$. $C_{N, \omega}$ may grow incredibly fast with respect to $N$, but $N$ is fixed at the beginning of our arguments. The boundedness of $C_{N, \omega}$ implies that 
	\begin{align} \label{bound}
		\|X_{\chi}(z)\|_s \leq C_s C_{N, \omega} \|z\|_s^3\leq C_{s, N, \omega} \|z\|^3_s.
	\end{align}
Inequality \eqref{bound}  holds for all $s>0$ because $\chi$ is a polynomial in only $z^l$. Denote $C:=C_{s, N, \omega}$.  Now we show that the initial value problem 
	\begin{align*}
		\left\{ \begin{array}{ll} &\partial_t z= X_{\chi}(z)=J\nabla_{z, \bar{z}}\chi(z),  \\
						&z(0) \in B_s(\tfrac{R}{C^{1/2}}) \subset h^s  \end{array} \right.
	\end{align*}
is well-posed up until $t \lesssim  R^{-2}$. In fact if we consider the operator 
$$\Phi(z(t)):=z(0) +\int_0^tX_{\chi}(z(s))ds,$$
and we use \eqref{bound}, we have that 
$$\|\Phi(z(t))\|_{s}\lesssim \frac{R}{C^{1/2}} +t \|X_{\chi}(z)\|_s\lesssim  \frac{R}{C^{1/2}} +t C \|z\|^3_s$$
and hence  for $t\lesssim  R^{-2}$ the operator $\Phi$ sends a ball of radius $4\frac{R}{C^{1/2}} $ into itself. In a similar manner one can show that for the same interval of time $\Phi$ is also a contraction.
 Thus, since $\mathscr{T}$ is the time 1 flow map for $\chi$, $\mathscr{T}$ converges for $R$ small enough. We then let $A := C^{1/2}$.

Estimate \eqref{identest} follows from Lemma \ref{ident} and the fact that $P$ has a zero of order four at the origin.

\end{proof}

\begin{remark}
At this point, we must discuss what one should expect if one continues the normal form reductions for increasingly higher order terms.  Inspection reveals that the order six term of $\tilde{H}$   is similar to the order six term for the Hamiltonian for the quintic NLSE, namely $\tfrac{1}{2}\int |\nabla \psi|^2+ \tfrac{1}{6} \int |\psi|^{6}.$ This observation combined with the growth of Sobolev norms for solutions of the quintic NLSE in our current setting, demonstrated by Haus and Procesi \cite{HP14}, suggests that the expected cascading of energy, if at all present in this model, is driven by higher order effects.  
\end{remark}


	\section{Resonances}\label{sec.resonances}
After applying Lemma \ref{iterlemma} at height $N$, the Hamiltonian $H$ is transformed into $\tilde{H}$:
	\begin{align} \label{newNLS}
		\tilde{H}(\{z_k\})&= H_0 + \mathscr{L}+ \mathscr{U} + \mathscr{R} \nonumber\\
				&= \tfrac{1}{2}\sum_{k \in \mathbb{Z}^2} \lambda_k |z_k|^2+ \tfrac{1}{4} \sum_{(k_1, k_2, k_3, k_4) \in \mathcal{R}^N} z_{k_1}z_{k_2}\bar{z}_{k_3}\bar{z}_{k_4}+ \tfrac{1}{4} \sum^*_{|k_1|>N \ or \ |k_3|>N} z_{k_1}z_{k_2}\bar{z}_{k_3}\bar{z}_{k_4}+\mathscr{R},
	\end{align}
	where  $\sum^*$ is the sum taken over $(k_1, k_2, k_3, k_4)$ satisfying the conservation of momentum condition and the ``weak'' resonance condition:
	\begin{align} \label{momentum}
		k_1+k_2=k_3+k_4, \mbox{ and }\\
		|\lambda_{k_1}+\lambda_{k_2}=\lambda_{k_3}+\lambda_{k_4}|<1
	\end{align}
$\mathcal{R}$ is the set of resonances defined by
	\begin{align*}
		\mathcal{R}&:= \left\{ (k_1, k_2, k_3, k_4) \in (\mathbb{Z}^2)^4~:~ k_1+k_2=k_3+k_4\right\}\\
			&\hspace{.5cm} \bigcap \left\{ (k_1, k_2, k_3, k_4) \in (\mathbb{Z}^2)^4~:~ \lambda_{k_1}+\lambda_{k_2}=\lambda_{k_3}+\lambda_{k_4}\right\}.
	\end{align*}
and 
	\begin{align*}
		\mathcal{R}^N&:= \left\{ (k_1, k_2, k_3, k_4) \in \mathcal{R}~:~ |k_i|\leq N, i=1, 2, 3, 4\right\}.
	\end{align*}

For any irrational vector $\omega$, the resonance condition $ \lambda_{k_1} + \lambda_{k_2} =\lambda_{k_3}+ \lambda_{k_4}$ is equivalent to 
		\begin{align*}
			 \omega_1\left[\left(k^{(1)}_1\right)^2 +\left(k^{(1)}_2\right)^2 -\left(k^{(1)}_3\right)^2 - \left(k^{(1)}_4\right)^2\right]=
			-\omega_2\left[\left(k^{(2)}_1\right)^2 +\left(k^{(2)}_2\right)^2 -\left(k^{(2)}_3\right)^2 - \left(k^{(2)}_4\right)^2\right]
		\end{align*}
and thus is equivalent to the following two independent one-dimensional resonance equations
		\begin{align*}
			\left(k^{(1)}_1\right)^2 +\left(k^{(1)}_2\right)^2 =\left(k^{(1)}_3\right)^2 + \left(k^{(1)}_4\right)^2\\
			\left(k^{(2)}_1\right)^2 +\left(k^{(2)}_2\right)^2 =\left(k^{(2)}_3\right)^2 + \left(k^{(2)}_4\right)^2.
		\end{align*}
	By the definition of vector addition, the first resonance condition (conservation of momentum \eqref{momentum}) also decomposes into one-dimensional resonance equations.  
	Thus $\mathcal{R}= \mathcal{R}_1 \cap \mathcal{R}_2$, where at the end
		\begin{align*}
			\mathcal{R}_i := &\left\{ (k_1, k_2, k_3, k_4) \in (\mathbb{Z}^2)^4~:~ k^{(i)}_1+k^{(i)}_2=k^{(i)}_3+k^{(i)}_4\right\}\\
&\bigcap \left\{ (k_1, k_2, k_3, k_4) \in (\mathbb{Z}^2)^4~:~ \left(k^{(i)}_1\right)^2 +\left(k^{(i)}_2\right)^2 =\left(k^{(i)}_3\right)^2 + \left(k^{(i)}_4\right)^2\right\}.
		\end{align*}

The nice feature about the dimensional decomposition of $\mathcal{R}$ is that the one-dimensional resonances are rather simple.  In fact, we can state a simpler characterization with a basic arithmetic lemma\footnote{See also Section 2.2 in \cite{CDS10}.}:
	\begin{lemma}\label{char}
		For $i=1, 2$,
		\begin{align*}
			\mathcal{R}_i &= \left\{ (k_1, k_2, k_3, k_4) \in (\mathbb{Z}^2)^4~:~ k^{(i)}_1=k^{(i)}_3 \mbox{ and  } k^{(i)}_2=k^{(i)}_4 \right\} \\
				&\hspace{.5cm}\bigcup \left\{ (k_1, k_2, k_3, k_4) \in (\mathbb{Z}^2)^4~:~ k^{(i)}_1=k^{(i)}_4 \mbox{ and } k^{(i)}_2=k^{(i)}_3 \right\}
		\end{align*}
	\end{lemma}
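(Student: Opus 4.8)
The plan is to reduce the claim to a single scalar identity in integers and then settle it with elementary symmetric functions (Vieta). Because $\mathcal{R}_i$ is defined purely through the $i$-th coordinates of the four vectors, I would first drop the superscript and set $a=k^{(i)}_1,\ b=k^{(i)}_2,\ c=k^{(i)}_3,\ d=k^{(i)}_4\in\mathbb{Z}$, so that membership $(k_1,k_2,k_3,k_4)\in\mathcal{R}_i$ becomes exactly the pair of conditions $a+b=c+d$ and $a^2+b^2=c^2+d^2$. The lemma then asserts that this pair of equations is equivalent to $\{a,b\}=\{c,d\}$ as an unordered pair, i.e.\ to $(a=c,\ b=d)$ or $(a=d,\ b=c)$.

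For the forward (nontrivial) inclusion I would convert power sums into elementary symmetric functions. Using the identity $(a+b)^2=(a^2+b^2)+2ab$ and its analogue for $c,d$, the two hypotheses together force $ab=cd$. Hence $a,b$ and $c,d$ are the roots of one and the same monic quadratic $x^2-(a+b)x+ab=x^2-(c+d)x+cd$, so that $\{a,b\}$ and $\{c,d\}$ coincide as multisets; reading off the two possible pairings of roots yields precisely the two cases in the statement. The reverse inclusion is immediate, since each of the two listed coordinate patterns manifestly preserves both the sum and the sum of squares, so every tuple in the right-hand set lies in $\mathcal{R}_i$. As the two coordinates are handled identically, running the argument for $i=1$ and $i=2$ completes the proof.

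I do not expect a genuine obstacle: this is the elementary arithmetic fact recorded in the footnote, and the factorization via Vieta's formulas is valid over $\mathbb{Z}$ with no ordering or positivity hypothesis, so the conclusion $\{a,b\}=\{c,d\}$ holds exactly as stated. The only point worth flagging is bookkeeping, namely confirming that the momentum part of $\mathcal{R}_i$ really is the one-dimensional sum condition $a+b=c+d$ (already observed in the text when $\mathcal{R}$ was split coordinatewise), so that the two scalar equations above are indeed all that must be analyzed.
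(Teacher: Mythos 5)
Your proposal is correct, and it settles the same scalar reduction as the paper but by a different algebraic mechanism. The paper's proof also passes immediately to integers $a,b,c,d$ with $a+b=c+d$ and $a^2+b^2=c^2+d^2$, but then argues by cancellation: assuming $a\neq c$ (the case $a=c$ is dispatched separately, since then $d=b$ follows from the sum condition), it writes $a^2-c^2=d^2-b^2$ as $(a+c)(a-c)=(d+b)(d-b)$ and divides by the nonzero common factor $a-c=d-b$ to obtain $a+c=d+b$, whence $a=d$ and $b=c$. Your route instead extracts the product identity $ab=cd$ from $(a+b)^2-(a^2+b^2)=2ab$ and concludes that $\{a,b\}$ and $\{c,d\}$ are the root multisets of one and the same monic quadratic $x^2-(a+b)x+ab$. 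The Vieta argument buys you a case-free proof: there is no need to split on whether $a=c$, because equality of the two elementary symmetric functions directly forces equality of the unordered pairs, and it isolates the underlying principle (sum and sum of squares determine an unordered pair of integers) in a form that generalizes to any integral domain in which $2$ is not a zero divisor. The paper's cancellation argument is marginally more minimal in machinery, at the cost of the explicit degenerate-case check. Your flagged bookkeeping point is indeed already handled in the text: the coordinatewise splitting of the momentum condition $k_1+k_2=k_3+k_4$ is recorded just before the lemma, so the two scalar equations are exactly the membership conditions for $\mathcal{R}_i$.
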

	\begin{proof}
		It suffices to show the equivalence of the two forms of $\mathcal{R}_i$ by considering $a, b , c, d \in \mathbb{Z}$ and proving that if $a+b=c+d$ and $a^2+b^2=c^2+d^2$ then either $a=c$ and $b=d$ or $a=d$ and $b=c$. The converse implication is obvious.

	Assume $a \neq c$.  Otherwise, $0=a-c=d-b$ would follow, implying $d=b$, at which point we would be done.  The condition $ a+b=c+d$ is equivalent to $a-c=d-b$ and implies
		\begin{align*}
			a^2+b^2=c^2+d^2 \Leftrightarrow a^2-c^2=d^2-b^2 \Leftrightarrow (a+c)(a-c)=(d+b)(d-b) \Rightarrow a+c=d+b.
		\end{align*}
		Together $a-c=d-b$ and $a+c=d+b$ imply $a=d$ and $b=c$.
	\end{proof}

We can now define resonances with respect to a fixed index $k$ and make some observations.  For $k \in \mathbb{Z}^2$, let
	\begin{align*}
		\mathcal{R}(k)= \left\{ (k_1, k_2, k_3) \in (\mathbb{Z}^2)^3~:~ (k_1,k_2, k_3, k)\in \mathcal{R}\right\}.
	\end{align*}
A result of the characterization of Lemma \ref{char} is that 
	\begin{align} \label{resok}
		\mathcal{R}(k)= \bigcup_{a, b \in \mathbb{Z}} \left\{\begin{array}{rl} \left( (k^{(1)}, b), (a, k^{(2)}), (a,b)\right), & \left( (a, k^{(2)}), (k^{(1)}, b), (a,b)\right), \\   \left( (k^{(1)}, k^{(2)}), (a,b), (a, b) \right), &\left( (a,b),  (k^{(1)}, k^{(2)}), (a, b)\right) \end{array} \right\}.
	\end{align}
The important aspect to note is that for $k \in \mathbb{Z}^2$, every ordered triple $(k_1, k_2, k_3) \in \mathcal{R}(k)$ must contain one $k_i$ so that the first component of $k_i$ is equal to the first component of $k$ and one $k_j$ ($j$ and $i$ could be equal) such that the second component of $k_j$ is equal to the second component of $k$. For the rational torus, this is not true.  For example, the triple $\left((1,1), (-1, 1), (0,0) \right)$ belongs to  $\mathcal{R}((0,2))$ when $\lambda_k = |k|^2$.  
$$
\begin{array}{c} \mbox{Figure 1}\\
\includegraphics[scale=0.5]{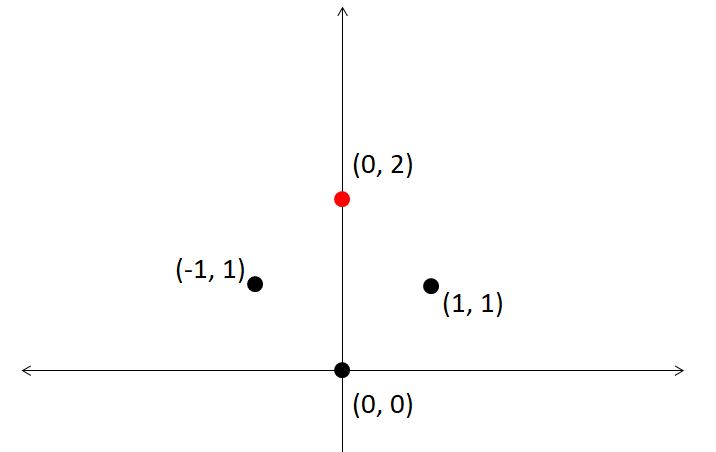}
\end{array}
$$

 In general, when $\omega$ is rational (i.e. $\omega\cdot m=0$ for some $m\in \mathbb{Z}^d$), there are three types of resonant 4-tuples. Resonant 4-tuples of any type form the vertices of a parallelogram in $\mathbb{Z}^2$.  The first type is what we will call {\bf degenerate}.  A degenerate resonant 4-tuple is a 4-tuple in which terms repeat.  In other words, $(k_1, k_2, k_3, k_4)$ is degenerate if $(k_1, k_2, k_3, k_4)= (k_1, k_2, k_1, k_2)$ or $(k_1, k_2, k_3, k_4)= (k_1, k_2, k_2, k_1)$. Hence the parallelogram reduces to a segment.  The second type of resonant tuple will be called {\bf parallel} resonances.  Parallel resonances are those in which the four points in the tuple form an axis parallel rectangle in $\mathbb{Z}^2$ (e.g.  $((0,0), (2,1), (2,0),(0,1))$).  The final type of resonance will be called {\bf nonparallel}, which are resonances that form a parallelogram that is not parallel (as in Figure 1 and Figure 3) to the axes in $\mathbb{Z}^2$.

$$
\begin{array}{c} \mbox{Figure 2: Parallel Resonance, any choice of $\omega$} \\ 
\includegraphics[scale=0.45]{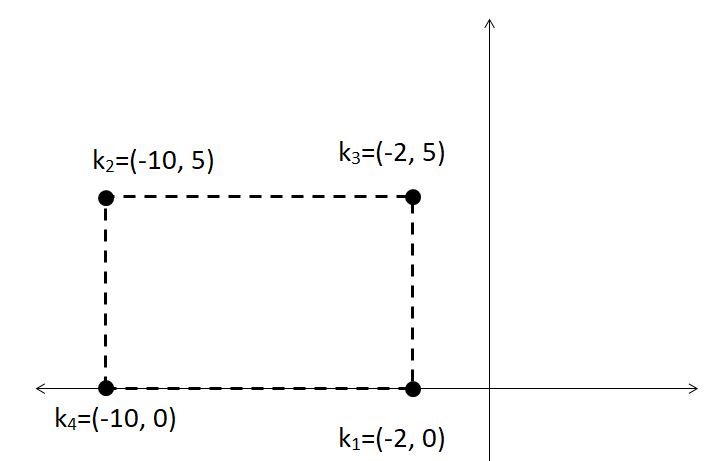}
\end{array}
$$

$$
\begin{array}{c}  \mbox{Figure 3: Nonparallel Resonance, $\omega=(1,2)$}\\\includegraphics[scale=0.45]{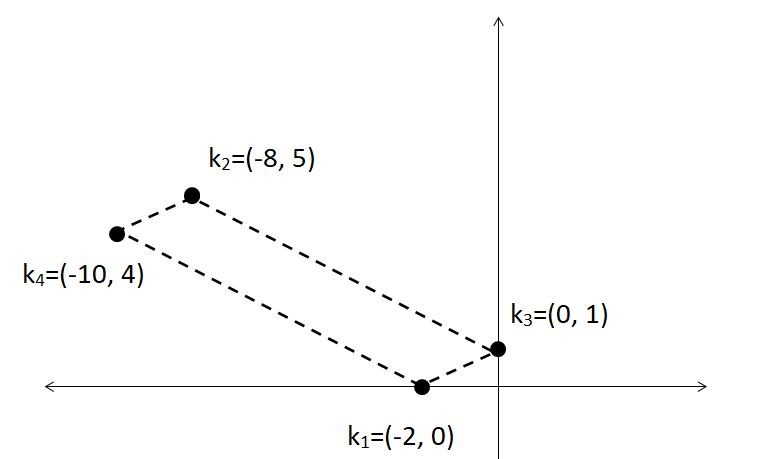}
\end{array}
$$

 When we declare $\omega$ to be irrational, the characterization \eqref{resok} shows us that there are no nonparallel resonances. The absence of nonparallel resonances is essential in distinguishing the difference between the dynamics of the rational and irrational case. In the rational case, there is a three-step process for mass to travel from one dyadic level to another:

$$
\begin{array}{cc} \mbox{Step 1} & \mbox{Step 2}  \\
\includegraphics[scale=0.3]{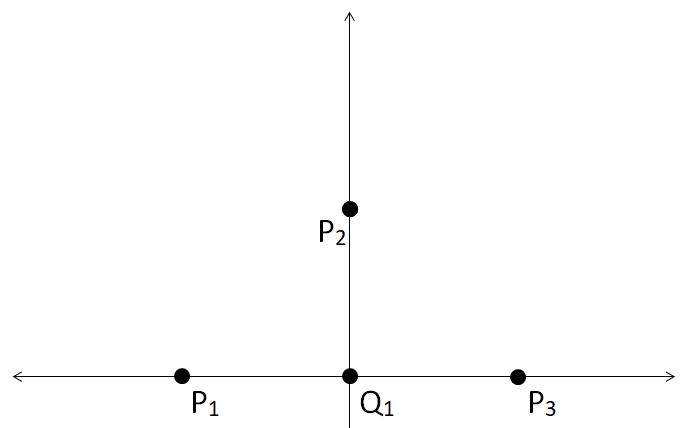} & \includegraphics[scale=0.3]{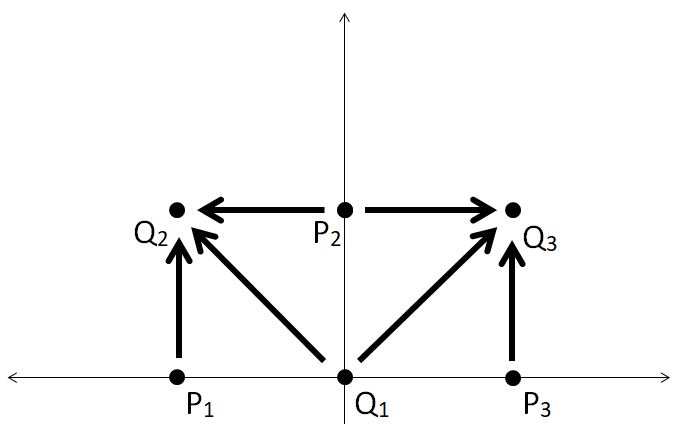}
\end{array}
$$

$$
\begin{array}{c} \mbox{Step 3}\\
 \includegraphics[scale=0.3]{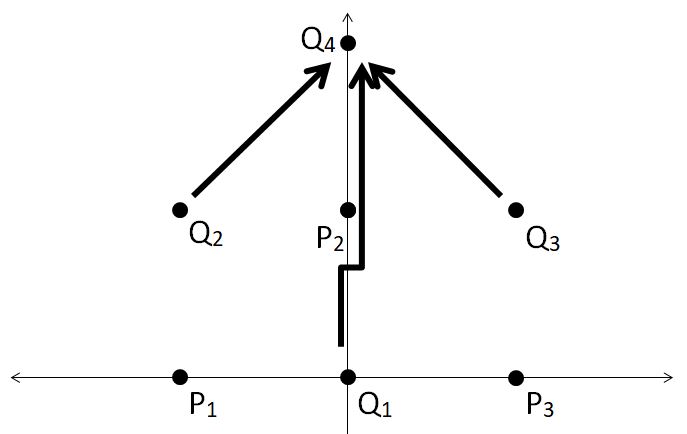}
\end{array}
$$

In Step 1, we start with an initial datum concentrated at the four points $P_1, P_2, P_3,$ and $Q_1$. Once time evolution begins, the resonant dynamics allow for mass to move from $P_1$, $P_2$, and $Q_1$ to newly activated $Q_2$. Simultaneously, mass moves from $Q_1$, $P_2$, and $P_3$ to $Q_3$ for the same reason. Now that $Q_2$ and $Q_3$ have mass, a new resonant 4-tuple appears in step 3, and as time evolves, mass moves from $Q_1$, $Q_2$, and $Q_3$ to $Q_4$.

 In the irrational case, Step 3 can not occur because it requires the appearance of a nonparallel resonant 4-tuple, namely $(Q_1, Q_2, Q_3, Q_4)$. 
 The absence of nonparallel resonant 4-tuples and the resulting dynamical consequences  will be detailed in the next section.  We  suspect that the difference in the dynamics between the rational and irrational cases is a result of the fact that the resonances decouple into products of one-dimensional resonances in the irrational case.  This may signify that  in some sense the irrational torus case inherits some  integrability  features of the one-dimensional cubic Schr\"odinger system, and as we know in this case the integrability manifested via conservation laws prevents any kind of cascade to high frequencies.

	\section{Dynamics}\label{sec.dynamics}

We will first study a truncation of $\tilde{H}$ (defined by  \eqref{newNLS}) at order 4. $H^*$ is commonly referred to as the {\bf Resonant System}:
	\begin{align*}
		H^* = H_0 + \mathscr{L} + \mathscr{U}
	\end{align*}
where $\mathscr{L}$ and $\mathscr{U}$ are given by Lemma \ref{iterlemma}. The corresponding system of equations is
	\begin{align}\label{gaugeNLS}
		i\dot{v}_k =\lambda_k v_k + \sum_{(k_1, k_2, k_3) \in \mathcal{R}^N(k)} v_{k_1}v_{k_2}\bar{v}_{k_3}+  \sum^*_{ |k_1|>N \ or \ |k_3|>N} v_{k_1}v_{k_2}\bar{v}_{k_3}.
	\end{align}
A gauge transformation allows us to reduce this system to 
	\begin{align}\label{truncnls}
		i\dot{u}_k = \sum_{(k_1, k_2, k_3) \in \mathcal{R}^N(k)} u_{k_1}u_{k_2}\bar{u}_{k_3}+  \sum^*_{|k_1|>N \ or \  |k_3|>N} u_{k_1}u_{k_2}\bar{u}_{k_3}e^{i(\lambda_{k_1}+\lambda_{k_2}-\lambda_{k_3}-\lambda_k)t}.
	\end{align}
We will first show that a solution to \eqref{truncnls} is  analytic as a function from $[0, T]$ to $H^{s}$, where $T$ is determined later.
In order to show analyticity an important observation is that $s>1$ implies $h^s$ is an algebra by Sobolev embedding.  Alternatively,  Young's inequality can be applied to $\mathscr{L}$ and $\mathscr{U}$,  allowing for the Taylor expansion in time of each $u_k$ which is the key to establishing the dynamic properties of system \eqref{truncnls}.  The following lemma is analogous to Lemma 2.3 in Carles and Faou \cite{CF12}. We include the full proof for completeness, but the proof for estimate \eqref{expbound} appears in full in \cite{CF12}.

\begin{lemma}\label{lemmaT}
Let $u(0) = \{u_k(0)\}_{k \in \mathbb{Z}^2} \in h^{s}$. There exists $T>0$ and a unique analytic in time solution $\{u_k\} : [0,T] \rightarrow h^s$ to \eqref{truncnls}. Moreover, there exist constants $C$ and $R$ such that for all $n \in \mathbb{N}$ and $\tau \leq T$,
	\begin{align} \label{expbound}
		\left|   \frac{d^nu_k}{(dt)^n}(\tau) \right| \leq CR^nn!.
	\end{align}
Moreover, $T$ can be taken to be $T = C\|u(0)\|^{-2}_s$, for some absolute constant $C>0$.
\end{lemma}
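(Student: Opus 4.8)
The plan is to treat \eqref{truncnls} as a cubic ODE on the Banach algebra $h^s$ and to extract both the solution and its analyticity from a single multilinear bound. First I would record the basic nonlinear estimate: since $s>1$, the space $h^s$ is an algebra, so the trilinear map underlying the right-hand side of \eqref{truncnls} (equivalently the vector fields of $\mathscr{L}$ and $\mathscr{U}$) satisfies $\|N(t,u)\|_s \le C_s\|u\|_s^3$, the unimodular factors $e^{i(\lambda_{k_1}+\lambda_{k_2}-\lambda_{k_3}-\lambda_k)t}$ not affecting the bound; the difference estimate $\|N(t,u)-N(t,w)\|_s \le C_s(\|u\|_s^2+\|w\|_s^2)\|u-w\|_s$ follows the same way. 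With these in hand, a standard contraction applied to the Duhamel operator $\Phi(u)(t)=u(0)-i\int_0^t N(\sigma,u(\sigma))\,d\sigma$ produces a unique solution; because the nonlinearity is cubic, the fixed point lives on a time interval of length $T\sim\|u(0)\|_s^{-2}$, and on this interval one gets the a priori bound $\|u(t)\|_s\le 2\|u(0)\|_s$. This already yields existence, uniqueness, and the stated form of $T$.

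For analyticity and the derivative bounds \eqref{expbound}, I would differentiate \eqref{truncnls} repeatedly in time and set up a majorant comparison. Writing $M_n:=\sup_{\tau\le T}\|\,d^n u/dt^n(\tau)\|_s$ and applying the Leibniz rule to each cubic term, the algebra property gives a recursion of the form
\begin{align*}
M_{n+1}\le C_s\sum_{a+b+c+d=n}\frac{n!}{a!\,b!\,c!\,d!}\,M_aM_bM_c,
\end{align*}
where the index $d$ counts the derivatives falling on the oscillatory factor. The crucial structural point is that in the weak-resonance sum $\sum^*$ one has $|\lambda_{k_1}+\lambda_{k_2}-\lambda_{k_3}-\lambda_k|<1$ by construction, so every time derivative of $e^{i(\cdots)t}$ is bounded in modulus by $1$; this is exactly what keeps the $d$-derivatives from contributing growing factors. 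Setting $m_n:=M_n/n!$, the recursion reduces to $(n+1)m_{n+1}\le C_s\sum_{a+b+c+d=n}\frac{1}{d!}m_am_bm_c$, which is precisely the coefficientwise statement that the generating function $Y(t)=\sum_n m_n t^n$ is majorized by the solution of the scalar ODE
\begin{align*}
\dot Y = C_s\,Y^3 e^{t},\qquad Y(0)=M_0.
\end{align*}
Since this scalar equation has an analytic solution near $t=0$, its Taylor coefficients satisfy $m_n\le CR^n$ for some $C,R>0$, hence $M_n\le CR^n n!$. The pointwise bound \eqref{expbound} then follows from $|u_k|\le\|u\|_s$ for every $k$, and the geometric control of the Taylor coefficients is equivalent to analyticity of $t\mapsto u(t)$ into $h^s$ on $[0,T]$.

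The step I expect to be the main obstacle is the bookkeeping of the explicit time dependence: one must organize the repeated differentiation so that the derivatives hitting the phase factors are isolated and estimated by the weak-resonance bound, rather than by the (potentially large) individual frequencies $\lambda_{k_i}$. Without the restriction $|\lambda_{k_1}+\lambda_{k_2}-\lambda_{k_3}-\lambda_k|<1$, the $d$-fold phase derivatives would produce factors $|\lambda_{k_1}+\lambda_{k_2}-\lambda_{k_3}-\lambda_k|^d$ growing in $k$, destroying the uniform radius of convergence $R$; it is precisely the weak-resonance construction coming from Lemma \ref{iterlemma} that licenses the clean majorant $\dot Y=C_sY^3e^t$ and hence the single pair of constants $(C,R)$ in \eqref{expbound}. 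Everything else is the routine multinomial and algebra accounting, for which I would follow the argument of Carles and Faou \cite{CF12} for the analogous estimate.
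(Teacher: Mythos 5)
Your proposal is correct and follows essentially the same route as the paper: a contraction argument for the Duhamel operator using the algebra property of $h^s$ (yielding existence, uniqueness, and $T \sim \|u(0)\|_s^{-2}$), followed by repeated Leibniz differentiation of \eqref{truncnls} in which the weak-resonance bound $|\lambda_{k_1}+\lambda_{k_2}-\lambda_{k_3}-\lambda_k|<1$ is exactly what controls the derivatives landing on the oscillatory factors. The only difference is bookkeeping at the final step: you sum the derivative recursion via the generating-function majorant $\dot Y = C_s Y^3 e^{t}$, whereas the paper iterates the substitution directly, obtaining $\bigl\|\tfrac{d^n}{d\tau^n}\{u_k(\tau)\}\bigr\|_{h^s} \le \sum_{m=1}^n C^m\bigl(\prod_{j=1}^{m-1}(2j+1)\bigr)\|\{u_k(\tau)\}\|_{h^s}^{2m+1}$ and concluding by rearrangement and standard Taylor estimates; both give the uniform bound $CR^n n!$ and real-analyticity on $[0,T]$.
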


\begin{proof}
For the well-posedness  we look for the fixed point  in $C([0,T],h^s)$ of
	\begin{align*}
L(\{u_k\}):= \{u_k\}(0) -i\left\{\int_0^t\sum_{(k_1, k_2, k_3) \in \mathcal{R}^N(k)} u_{k_1}u_{k_2}\bar{u}_{k_3}+  \sum^*_{|k_1|>N \ or \  |k_3|>N} u_{k_1}u_{k_2}\bar{u}_{k_3}e^{i(\lambda_{k_1}+\lambda_{k_2}-\lambda_{k_3}-\lambda_k) \cdot }\right\}
	\end{align*}
and one can easily see, using the algebra structure of $h^{s}$ for $s>1$,  that
$$\|L(\{u_k\})\|_{h^{s}}\leq \|\{u_k(0)\}\|_{h^{s}}+TC\|\{u_k\}\|_{h^{s}}^3$$
so if $R\sim \|\{u_k(0)\}\|_{h^{s}}$ and $T\leq T_1:= C^{-1}\|\{u_k(0)\}\|_{h^{s}}^{-2}$ the mapping $L$ is a contraction and a fixed point, which is also solution, exists and is unique. 

Since $h^{s}$ for $s>1$ is an algebra, bootstrapping implies that $\{u_k\} \in C^{\infty}\left([0,T_1]; h^{s}\right)$. For $\tau \in [0,T_1]$, we can deduce from equation \eqref{truncnls} that
	\begin{align*}
		\left\|\tfrac{d}{d\tau} \{u_k(\tau)\}\right\|_{h^{s}} \leq C\|\{u_k(\tau)\}\|_{h^{s}}^3.
	\end{align*}
Let $\Lambda_{k,k_1,k_2,k_3}:=i(\lambda_{k_1}+\lambda_{k_2}-\lambda_{k_3}-\lambda_k)$.  Then
	\begin{align*}
				\frac{d^nu_k}{(d\tau)^n}(\tau)=  &\sum_{(k_1, k_2, k_3) \in \mathcal{R}^N(k)} \left(\tfrac{d}{d\tau}\right)^{n-1} \left[u_{k_1}u_{k_2}\bar{u}_{k_3}\right]\\
+  &\sum^*_{ |k_1|>N \ or \ |k_3|>N} \,\,\sum_{n-1=\alpha_1+\alpha_2}\Lambda_{k,k_1,k_2,k_3}^{\alpha_1}e^{i(\lambda_{k_1}+\lambda_{k_2}-\lambda_{k_3}-\lambda_k) \tau }\left(\tfrac{d}{d\tau}\right)^{\alpha_2}\left[ u_{k_1} u_{k_2}\bar{u}_{k_3}\right].
			\end{align*}
	
Our normal form reduction ensures that $|\Lambda_{k,k_1,k_2,k_3}|<1$ and since $|e^{i(\lambda_{k_1}+\lambda_{k_2}-\lambda_{k_3}-\lambda_k) \tau }|=1$, we have, for $n\geq 2$,
	\begin{align*}
		\left\|\tfrac{d^n}{(d\tau)^n} \{u_k(\tau)\}\right\|_{h^{s}} \leq \sum_{m=1}^n C^m\left( \prod_{j=1}^{m-1} (2j+1)\right)\|\{u_k(\tau)\}\|_{h^{s}}^{2m+1}.
	\end{align*}
Finally, 
	\begin{align*}
		\left\|\tfrac{d^n}{(d\tau)^n} \{u_k(0)\}\right\|_{h^{s}} \leq\sum_{m=1}^n \|\{u_k(0)\}\|_{h^{s}}m!(3C^{1/2}\|\{u_k(0)\}\|^2_{h^{s}})^{m}.
	\end{align*}
Therefore, $\{u_k\}$ is analytic for $\tau \leq T_2:= \frac{1}{6}C^{1/2}\|\{u_k(0)\}\|^{-2}_{h^{s}}$ using rearrangement.  Estimate \eqref{expbound} now follows from standard Taylor series estimates and setting $T= \tfrac{1}{2}\min(T_1, T_2)$.
\end{proof}

	\subsection{Taylor Series}\label{subsec.taylor}

Define and denote the $M$-box in $\mathbb{Z}^2$ by
	\begin{align*}
		Q_M = \left\{ k \in \mathbb{Z}^2~:~ \sup \{ |k^{(1)}|, |k^{(2)}|\} \leq M \right\}.
	\end{align*}
Using analyticity and a Taylor series expansion of each $u_k$ in $t$ we will show that  if the support of the 
$M$-box is centered at the origin of $\mathbb{Z}^2$, then for $k \in \mathbb{Z}^2 \setminus Q_M$ , $| u_k(t)|=0$ for all $t\in [0,T]$, where $T$ comes from Lemma \ref{lemmaT}. The primary barrier to the growth of the modes outside of $Q_M$ is that the resonant structure achieved after the normal form reduction prevents energy from passing from $Q_M$ to $Q_N \setminus Q_M$, where $N>M$. Moreover, the conservation of momentum prevents energy from jumping from the region $Q_M$ to $\mathbb{Z}^2 \setminus Q_N$.  Figure 4 demonstrates that for data concentrated in the $M$-box $Q_M$, the flow of the truncated system \eqref{truncnls}, does not allow the support of the solution to expand beyond $Q_M$.  The stability of the $Q_N \setminus Q_M$ and $\mathbb{Z}^2 \setminus Q_N$ regions are due to slightly different mechanisms as we will see in the proof of Lemma \ref{nocasc} below. These regions are both outlined in Figure 4 for this reason.

$$
\begin{array}{c} \mbox{Figure 4}\\
\includegraphics[scale=0.33]{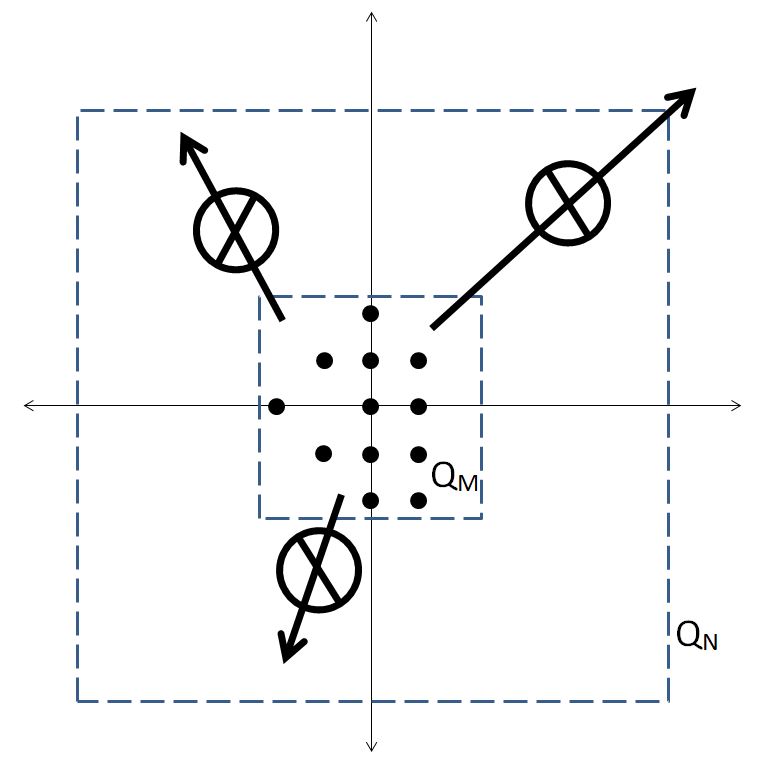}
\end{array}
$$

\begin{lemma}\label{nocasc}
	Let $M\in \mathbb{Z}_+$ and assume $\mbox{supp } \{u_k(0)\} \subset Q_M$ and let $\{u_k\}: [0,T] \rightarrow h^{s}$ be a solution to \eqref{truncnls}.  If $k \in  \mathbb{Z}^2 \setminus Q_M$ then for any $n \in \mathbb{Z}_+$,
	\begin{align*}
		  \frac{d^nu_k}{(dt)^n}(0)=0.
	\end{align*}
\end{lemma}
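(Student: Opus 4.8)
The plan is to argue by strong induction on $n$, treating all frequencies $k\in\mathbb{Z}^2\setminus Q_M$ simultaneously and using the differentiated form of \eqref{truncnls}. Let $P(n)$ denote the assertion that $\frac{d^n u_k}{(dt)^n}(0)=0$ for every $k\notin Q_M$. The base case $P(0)$ is exactly the hypothesis that $\{u_k(0)\}$ is supported in $Q_M$. For the inductive step, since $i\dot u_k$ equals the right-hand side of \eqref{truncnls}, I would differentiate $n-1$ times and apply the Leibniz rule to write $\frac{d^n u_k}{(dt)^n}(0)$ as a finite linear combination (with the bounded constants $\Lambda_{k,k_1,k_2,k_3}^{\alpha}$ and the factor $e^{i(\cdots)t}\big|_{t=0}=1$ from $\sum^*$) of products
\[
\frac{d^{\alpha_1}u_{k_1}}{(dt)^{\alpha_1}}(0)\,
\frac{d^{\alpha_2}u_{k_2}}{(dt)^{\alpha_2}}(0)\,
\overline{\frac{d^{\alpha_3}u_{k_3}}{(dt)^{\alpha_3}}(0)},
\]
with $\alpha_1+\alpha_2+\alpha_3\le n-1$ (so each $\alpha_i\le n-1$) and $(k_1,k_2,k_3)$ ranging over $\mathcal{R}^N(k)$ or over the weakly resonant tuples of $\sum^*$. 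This is precisely the expansion already recorded in the proof of Lemma \ref{lemmaT}.

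The decisive step is a purely structural claim: for $k\notin Q_M$, \emph{every} tuple $(k_1,k_2,k_3)$ appearing in either sum for index $k$ has at least one entry lying outside $Q_M$. For the resonant sum this is the heart of the matter and is exactly where irrationality enters. By Lemma \ref{char} and its consequence \eqref{resok}, every $(k_1,k_2,k_3)\in\mathcal{R}(k)$ contains an index $k_i$ whose first coordinate equals $k^{(1)}$ and an index $k_j$ whose second coordinate equals $k^{(2)}$. Since $k\notin Q_M$ forces $|k^{(1)}|>M$ or $|k^{(2)}|>M$, the corresponding matching index satisfies $\|k_i\|_\infty>M$, hence lies outside $Q_M$. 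This is the analytic incarnation of the absence of nonparallel resonances; in the rational case the matching property fails — as with the triple $((1,1),(-1,1),(0,0))\in\mathcal{R}((0,2))$ — and the argument genuinely breaks. For the weak-resonance sum the claim is immediate from the summation constraint $|k_1|>N$ or $|k_3|>N$: with $N\ge M$, whichever of $k_1,k_3$ has norm exceeding $N$ lies outside $Q_N\supseteq Q_M$.

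Granting this structural claim the induction closes at once: in each product above at least one factor carries an index $k_i\notin Q_M$ with differentiation order $\alpha_i\le n-1$, so the induction hypothesis $P(\alpha_i)$ forces that factor (and hence its complex conjugate) to vanish at $0$, killing the entire product. Therefore $\frac{d^n u_k}{(dt)^n}(0)=0$, which is $P(n)$, completing the induction.

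The only genuine obstacle is the resonant structural claim; once the decoupling $\mathcal{R}=\mathcal{R}_1\cap\mathcal{R}_2$ and Lemma \ref{char} are in hand, the rest is bookkeeping of the Leibniz expansion together with the induction. A minor point to dispatch cleanly is the normalization of the norms, so that $\|k\|_\infty>M$ and $|k_1|>N$ with $N\ge M$ indeed place the relevant frequency outside $Q_M$; this is routine and does not affect the structure of the argument.
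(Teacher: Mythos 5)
Your induction scheme and your treatment of the resonant sum are sound and coincide with the paper's: the Leibniz expansion is the one already recorded in the proof of Lemma \ref{lemmaT}, and the use of Lemma \ref{char} and identity \eqref{resok} to locate, in every triple of $\mathcal{R}^N(k)$, a frequency sharing a coordinate with $k$ (hence outside $Q_M$ whenever $k \notin Q_M$) is exactly the paper's argument for the region $Q_N \setminus Q_M$.

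The gap is in your treatment of $\sum^*$. You read the constraint ``$|k_1|>N$ or $|k_3|>N$'' literally, as a condition on $k_1,k_3$ alone, and conclude that every term of $\sum^*$ automatically carries a factor outside $Q_M$. But $\sum^*$ arises from the Hamiltonian term $\mathscr{U}$ of Lemma \ref{iterlemma}, which consists of all weakly resonant monomials $z_{k_1}z_{k_2}\bar{z}_{k_3}\bar{z}_{k_4}$ containing at least one high frequency \emph{among the four indices}; when one differentiates in $\bar{z}_k$ with $|k|>N$, that high frequency can be $k$ itself, so the equation for such a mode $k$ contains terms $u_{k_1}u_{k_2}\bar{u}_{k_3}$ in which $k_1,k_2,k_3$ may all lie in $Q_M$. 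This is precisely why the paper's proof splits $\mathbb{Z}^2\setminus Q_M$ into two regions governed by two different mechanisms: for $k\in Q_N\setminus Q_M$ your argument works, but for $k \in \mathbb{Z}^2\setminus Q_N$ the paper writes the equation as a sum over all momentum-conserving triples $k_1+k_2=k_3+k$ and kills these terms by conservation of momentum --- if $k_1,k_2,k_3\in Q_M$ (which is what the induction hypothesis forces for a nonvanishing product), then $\|k\|_\infty = \|k_1+k_2-k_3\|_\infty \le 3M < N$, contradicting $k\notin Q_N$. Note that this mechanism relies on the standing assumption $N>3M$, which the paper uses explicitly; your proposed relation $N\ge M$ (or $N\ge \sqrt{2}\,M$ after fixing the Euclidean versus $\ell^\infty$ normalization) is not sufficient, since for $M<N<3M$ a weakly resonant monomial with $k_1,k_2,k_3\in Q_M$ and $k=k_1+k_2-k_3\notin Q_N$ could in principle survive in $\mathscr{U}$ and drive $u_k$ away from zero. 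So the missing ingredient is concrete: the momentum-conservation argument, together with the requirement $N>3M$, for the outer region $\mathbb{Z}^2\setminus Q_N$.
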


	\begin{proof}
		We will prove Lemma \ref{nocasc} by strong induction on the derivative $n$.  Let $k\in Q_N \setminus Q_M$ and $K:=\sup\{ |k^{(1)}|, |k^{(2)}|\}$.  Note that $K>M$.  By assumption, $u_k(0) = 0$ and 
			\begin{align*}
				i \dot{u}_k(0) =  \sum_{(k_1, k_2, k_3) \in \mathcal{R}^N(k)} u_{k_1}(0)u_{k_2}(0)\bar{u}_{k_3}(0)+  \sum^*_{|k_1|>N \ or \ |k_3|>N} u_{k_1}(0)u_{k_2}(0)\bar{u}_{k_3}(0).
			\end{align*}
		In the second sum, each component contains a factor, $u_j$, where $|j|>N$ and thus the second summation is equal to zero. The first summation can be handled by Lemma \ref{char} and  identity \eqref{resok}. Identity \eqref{resok} implies that every component of the first  sum contains a factor, $u_j$, such that $j \in Q_K \setminus Q_{K-1}$, which satisfies  $(Q_K \setminus Q_{K-1}) \cap Q_M= \emptyset$.  Thus, $\dot{u}_k(0)=0$. 
		
		\medskip
		
		For $k \in \mathbb{Z}^2 \setminus Q_N$, 
			\begin{align*}
				i \dot{u}_k(0) = \sum_{k_1+k_2=k_3+k} u_{k_1}(0)u_{k_2}(0)\bar{u}_{k_3}(0).
			\end{align*}
		Suppose, by contradiction that there exists $(k_1, k_2, k_3)$ such that $u_{k_1}(0)u_{k_2}(0)\bar{u}_{k_3}(0) \neq 0$ which implies $u_{k_i}(0)\neq 0$ for $i=1,2,3$.  Therefore, $k_i \in Q_M$ for $i =1,2,3$.  Since $k=k_1+k_2-k_3$, for $j=1,2$,   $k^{(j)}=k^{(j)}_1+k^{(j)}_2-k^{(j)}_3$ and thus
			\begin{align*}
				|k^{(j)}|=|k^{(j)}_1+k^{(j)}_2-k^{(j)}_3| \leq 3M.
			\end{align*}
However, $k \in \mathbb{Z}^2 \setminus Q_N$ implies that $|k^{(j)}|>N>3M$ for either $j=1$ or $j=2$. 
This is a contradiction, so $\dot{u}_k(0)=0$.

		Now assume $(\frac{d}{dt})^{m}u_k(0) = 0$ for all $k \in  \mathbb{Z}^2  \setminus Q_M$ and $m \in \{0, ..., n-1\}$.  Then for $k \in Q_N \setminus Q_M$,
			\begin{align*}
				\frac{d^nu_k}{(dt)^n}(0)= \left(\frac{d}{dt}\right)^{n-1} \left(  \sum_{(k_1, k_2, k_3) \in \mathcal{R}^N(k)} u_{k_1}u_{k_2}\bar{u}_{k_3}+  \sum^*_{ |k_1|>N \ or \ |k_3|>N} u_{k_1}u_{k_2}\bar{u}_{k_3} e^{i(\lambda_{k_1}+\lambda_{k_2}-\lambda_{k_3}-\lambda_k)\cdot} \right)(0)
			\end{align*}
Let $\Lambda_{k,k_1,k_2,k_3}:=i(\lambda_{k_1}+\lambda_{k_2}-\lambda_{k_3}-\lambda_k)$.  By distributing the derivatives we have
			\begin{align*}
				\frac{d^nu_k}{(dt)^n}(0)=  \sum_{(k_1, k_2, k_3) \in \mathcal{R}^N(k)} \,\,\sum_{n-1=\alpha_1+\alpha_2+\alpha_3}\left[\left(\tfrac{d}{dt}\right)^{\alpha_1} u_{k_1}\right](0)\left[\left(\tfrac{d}{dt}\right)^{\alpha_2} u_{k_2}\right](0)\left[\left(\tfrac{d}{dt}\right)^{\alpha_3} \bar{u}_{k_3}\right](0)\\
+  \sum^*_{ |k_1|>N \ or \ |k_3|>N} \,\,\sum_{n-1=\alpha_1+\alpha_2+\alpha_3+\alpha_4}\Lambda_{k,k_1,k_2,k_3}^{\alpha_1}\left[\left(\tfrac{d}{dt}\right)^{\alpha_2} u_{k_1}\right](0)\left[\left(\tfrac{d}{dt}\right)^{\alpha_3} u_{k_2}\right](0)\left[\left(\tfrac{d}{dt}\right)^{\alpha_4} \bar{u}_{k_3}\right](0).
			\end{align*}
The induction hypothesis and the constraints $(k_1, k_2, k_3) \in \mathcal{R}^N(k)$ or $|k_1|>N \ or \ |k_3|>N$ implies that for each summand in the above summation at least one factor is zero.   For $k \in \mathbb{Z}^2 \setminus Q_N$
			\begin{align*}
				\frac{d^nu_k}{(dt)^n}(0)&= \left(\frac{d}{dt}\right)^{n-1} \left(    \sum_{k_1+k_2=k_3+k} u_{k_1}u_{k_2}\bar{u}_{k_3} e^{i(\lambda_{k_1}+\lambda_{k_2}-\lambda_{k_3}-\lambda_k)\cdot} \right)(0)\\
				&=   \sum_{k_1+k_2=k_3+k}  \,\,\sum_{n-1=\alpha_1+\alpha_2+\alpha_3+\alpha_4}\Lambda_{k,k_1,k_2,k_3}^{\alpha_1}\left[\left(\tfrac{d}{dt}\right)^{\alpha_2} u_{k_1}\right](0)\left[\left(\tfrac{d}{dt}\right)^{\alpha_3} u_{k_2}\right](0)\left[\left(\tfrac{d}{dt}\right)^{\alpha_4} \bar{u}_{k_3}\right](0) .
			\end{align*}
The induction hypothesis and expansion concludes the proof. 
	\end{proof}
	
\begin{remark}In the square torus case, the initial step of the proof is invalid because for certain $k$ the vector field will have at least one nonzero component.  Any nonzero component in the initial vector field corresponds to a nonparallel resonance to which $k$ belongs. The existence of nonparallel resonances, such as $(k, k_1, k_2, k_3) \in \mathcal{R}$, leads to corresponding nonzero vector field components of the form $v_{k_1}v_{k_2}\bar{v}_{k_3}$, where $(k_1, k_2, k_3) \in \mathcal{R}^N(k)$, which in turn drive the dynamics in the work of Carles and Faou \cite{CF12}.
\end{remark}

Lemma \ref{nocasc} leads directly to the following corollary by Taylor expansion.
\begin{corollary}\label{nogrowth}
		Let $M \in \mathbb{Z}_+$ and assume $\mbox{supp } \{u_k(0)\} \subset Q_M$ and let $\{u_k\}: [0,T] \rightarrow h^{s}$ be an analytic solution to \eqref{truncnls}.  If $k \in  \mathbb{Z}^2 \setminus Q_M$, then 
	\begin{align*}
		  u_k(t)=0
	\end{align*}
		for $t \in [0, T]$.
	\end{corollary}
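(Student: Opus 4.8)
The plan is to read the corollary off as the Taylor-series consequence of Lemma \ref{nocasc} combined with the time-analyticity provided by Lemma \ref{lemmaT}, followed by a connectedness argument to pass from local to global vanishing. Fix $k \in \mathbb{Z}^2 \setminus Q_M$. The support hypothesis $\operatorname{supp}\{u_k(0)\} \subset Q_M$ is exactly the assertion $u_k(0) = 0$, i.e. the $n=0$ case, while Lemma \ref{nocasc} supplies $\left(\tfrac{d}{dt}\right)^n u_k(0) = 0$ for every $n \in \mathbb{Z}_+$. Thus every Taylor coefficient at $t=0$ of the scalar function $t \mapsto u_k(t)$ vanishes.

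Next I would feed in analyticity. By Lemma \ref{lemmaT} the map $t \mapsto \{u_k(t)\}$ is analytic into $h^s$ on $[0,T]$, and estimate \eqref{expbound} bounds $\left|\left(\tfrac{d}{dt}\right)^n u_k(\tau)\right| \leq C R^n n!$ \emph{uniformly} for $\tau \leq T$; consequently the Taylor series of $u_k$ about any base point $\tau_0 \in [0,T]$ converges on an interval of a fixed radius $1/R$ independent of $\tau_0$. Expanding about $\tau_0 = 0$ and using that all coefficients are zero yields $u_k \equiv 0$ on $[0, \min(T, 1/R)]$.

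To reach the whole interval I would run the standard connectedness argument. Let $E$ denote the set of $\tau \in [0,T]$ at which $u_k$ and all of its time-derivatives vanish. Then $E$ is nonempty (it contains the subinterval just produced), closed (each derivative is continuous), and open: at any $\tau_0 \in E$ the fixed-radius Taylor expansion has all coefficients equal to zero, so $u_k$, and with it every derivative, vanishes on $(\tau_0 - 1/R, \tau_0 + 1/R) \cap [0,T]$. Since $[0,T]$ is connected, $E = [0,T]$, and in particular $u_k(t) = 0$ for all $t \in [0,T]$.

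Essentially everything here is immediate once Lemmas \ref{nocasc} and \ref{lemmaT} are granted; the only genuinely load-bearing point is the passage from local vanishing near $t=0$ to global vanishing on $[0,T]$, and this is precisely where the uniformity in $\tau$ of the derivative bound \eqref{expbound} matters. Were the radius of analyticity permitted to collapse before $t = T$, the continuation could stall, so I would take care to invoke \eqref{expbound} with constants $C, R$ independent of $\tau \leq T$, exactly as stated in Lemma \ref{lemmaT}.
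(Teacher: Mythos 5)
Your proposal is correct and is exactly the argument the paper intends: the paper dispatches this corollary with the single remark that Lemma \ref{nocasc} ``leads directly to the corollary by Taylor expansion,'' and your write-up simply fills in that Taylor-expansion step, correctly identifying the uniformity in $\tau$ of the bound \eqref{expbound} from Lemma \ref{lemmaT} as what guarantees a fixed radius of convergence and hence propagation of the vanishing from $t=0$ to all of $[0,T]$.
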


	\section{Proof of Theorem \ref{mainthm}}\label{sec.proof}

 Lemma \ref{lemmaT} limits the time of analyticity to being quadratic with respect to the size of the initial data. Therefore, when constructing the stability estimate we will only consider time in the interval $[0,\varepsilon^{-2}]$ where $\varepsilon>0$ is the size of the initial data. All initial data will also be assumed to have bounded Fourier support contained in the $M$-box, $Q_M$.

 Let $\varepsilon>0$ and $u(0)=z(0)=v(0)$, $\{v_k(0)\}_{k \in \mathbb{Z}^2}=\{u_k(0)\}_{k \in \mathbb{Z}^2}= \{z_k(0)\}_{k \in \mathbb{Z}^2}$ with $\|z(0)\|_s=\|u(0)\|_s=\|v(0)\|_s \leq \varepsilon$. Furthermore, let $\{z_k(t)\}$ be the solution to \eqref{newNLS}, the normalized Schr\"odinger equation, $\{v_k(t)\}$  be the solution to \eqref{gaugeNLS}, the {\it condensed} normalized Schr\"odinger system, and $\{u_k(t)\}$ be the solution to \eqref{truncnls}, the gauge transformation of \eqref{gaugeNLS}. 

 Corollary \ref{nogrowth}, implies that
	\begin{align*}
		|u_k(t)|=|v_k(t)|&=0  \mbox{ for } k \in \mathbb{Z}^2\setminus Q_M 
	\end{align*}
for $t <\varepsilon^{-2}$.

The  vector fields corresponding to $\{z_k(t)\}$, $\{v_k(t)\}$, and $\{u_k(t)\}$  are analytic vector-valued polynomials with nonlinearities  with zeroes of order three. We use Duhamel's formula which provides the equation
 	\begin{align*}
		&z_k(t) - v_k(t)\\ &= e^{i\lambda_k t}z_k(0)- e^{i\lambda_k t}v_k(0)\\ &\hspace{1cm}- e^{i\lambda_k t}\int_0^te^{i\lambda_k s}\left(X_{\mathscr{L}}(z(s))_k+X_{\mathscr{U}}(z(s))_k+X_{\mathscr{R}}(z(s))_k- X_{\mathscr{L}}(v(s))_k-X_{\mathscr{U}}(v(s))_k \right)\,ds\\
			&=e^{i\lambda_k t}\int_0^te^{i\lambda_k s}\big(X_{\mathscr{L}}(v(s)+(z(s)-v(s)))_k+X_{\mathscr{U}}(v(s)+(z(s)-v(s)))_k\\&\hspace{1cm}+X_{\mathscr{R}}(v(s)+(z(s)-v(s)))_k- X_{\mathscr{L}}(v(s))_k-X_{\mathscr{U}}(v(s))_k \big)\,ds
	\end{align*}
since $h^{s}$ for $s>1$ is an algebra and $X_{\mathscr{L}}$, $X_{\mathscr{U}}$ and $X_{\mathscr{R}}$ are  multilinear convolution polynomials, $X_{\mathscr{L}}(v(s))_k$ and $X_{\mathscr{U}}(v(s))_k $ cancel and thus
	\begin{align*}
		\|z(t)-v(t)\|_s &\lesssim \int_0^t  P(\|z(s)-v(s)\|_s, \|v(s)\|_s)+ \|v(s)\|_s^5\,ds\\
			&\lesssim t(C_s\varepsilon)^5+\int_0^t  P(\|z(s)-v(s)\|_s, \|v(s)\|_s)\,ds
	\end{align*}
where $P$ is a polynomial with a zero of order 3 at 0 and a zero of order one in $\|z(s)-v(s)\|_s$. Therefore, up to time $t \leq \varepsilon^{-2}$, $\|z(t)-v(t)\|_s \leq C_s^{5}\varepsilon^3$.  This implies that  
	\begin{align*}
		|z_k(t)|=|z_k(t)-v_k(t)|&\leq \langle k \rangle^{-s}C_s\varepsilon^3   
	\end{align*}
 for  $k \in \mathbb{Z}^2\setminus Q_M$ when $t \leq \varepsilon^{-2}$.


Let $\hat{\psi}_0=z(0)$ and $\hat{\psi}(t)$ be a solution to the equation generated by our original Hamiltonian \eqref{ham}.  Since $\|z(t)\|_s$ stays small with respect to $\varepsilon$,  we can choose $\varepsilon$ small enough so that $z(t)$ is in the image of the normal form transformation $\mathscr{T}$ from the Birkhoff Normal Form lemma, Lemma \ref{iterlemma}, for all $t<\varepsilon^{-2}$.  We then use the inverse of $\mathscr{T}$  to associate $z(t)$ to the trajectory, $\hat{\psi}(t)$, by $z=\mathscr{T}(\hat{\psi}(t))$ which further implies that $\|z(t)- \hat{\psi}(t)\|_s< C_{s, 3M, \omega} \varepsilon^3$.  
Therefore, for $k \in  \mathbb{Z}^2 \setminus Q_M$,
		\begin{align*}
			|\hat{\psi}_k(t)- z_k(t)|< \langle k \rangle^{-s}C_{s, 3M, \omega}\varepsilon^3 .
		\end{align*}  

For any $\gamma <3$, there exists a $\varepsilon_{s, \omega, M, \gamma}>0$ such that if $\varepsilon< \varepsilon_{s, \omega, M, \gamma}$, then 
		\begin{align*}
			 C_{s, 3M, \omega}\varepsilon^3 &< \frac{1}{2} \varepsilon^{\gamma} \mbox{ and } \\
			C_s\varepsilon^3 &< \frac{1}{2} \varepsilon^{\gamma}
		\end{align*}
which implies that $|z_k(t)|< \frac{1}{2}\varepsilon^{\gamma}$ and thus
		\begin{align*}
			|\hat{\psi}_k(t)| <\varepsilon^{\gamma}
		\end{align*}
for $t<\varepsilon^{-2}$.

This provides a contrasting result to that of Carles and Faou showing that there exists $\psi$ satisfying $|\hat{\psi}_k(t)|>\varepsilon^{2\eta+1}$ for $\eta \in (0,1)$, $t\approx \varepsilon^{-2}$, and $|k|\sim (\log \varepsilon^{-1})^{1/4}$.

\appendix

\section{Comparison with  the construction in \cite{CKSTT10} and \cite{GK12} }\label{app.A}

In this appendix we want to compare the construction of Colliander, Keel, Staffilani, Takaoka and Tao \cite{CKSTT10}, and the refinement of it by Guardia and Kaloshin  \cite{GK12}.

Consider again
\begin{align}\label{NLSApp}
		\left\{ \begin{array}{ll} i\partial_t \psi= \Delta \psi-|\psi|^2\psi, & x\in \mathbb{T}^2, t \in \mathbb{R}\\
						\psi(0)= \psi_0 \in H^s(\mathbb{T}^2),   \end{array} \right.
	\end{align}
where $\mathbb{T}^2$ is a square torus and $s>1$.
In   \cite{GK12} and \cite{GK17} (see also \cite{CKSTT10}), the authors  prove the following result.
	\begin{theorem}\label{GKThm}
		Let $s>1$. Then there exists $c > 0$ with the following property: for any small $\mu\ll1$ and any large
$K \gg 1$ there exists a  global solution $\psi(t, x)$ of \eqref{NLSApp} and a time $T$ satisfying
		\begin{align*}
			0 < T \leq e^{\left(K/\mu\right)^c}
		\end{align*}
such that
		\begin{align*}
			\|\psi(T)\|_{H^s(\mathbb{T}^2)} \geq K \hspace{.2cm} \mbox{ and } \hspace{.2cm} \|\psi(0)\|_{H^s(\mathbb{T}^2)} \leq \mu.
		\end{align*}

	\end{theorem}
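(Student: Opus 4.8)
The plan is to reverse-engineer the obstruction identified in Section~\ref{sec.resonances}: on the square torus the resonant set $\mathcal{R}$ contains \emph{nonparallel} resonances, and these are exactly the interactions that can push mass across dyadic scales. I would exploit them to build a genuine cascade, following the scheme of \cite{CKSTT10} as sharpened in \cite{GK12}. First, I would gauge away the linear flow and pass to the resonant system associated with $H_0+\mathscr{L}$ (the square-torus analogue of \eqref{gaugeNLS}), whose nonlinearity keeps only the quadruples in $\mathcal{R}$, i.e. the rectangles $k_1+k_3=k_2+k_4$, $|k_1|^2+|k_3|^2=|k_2|^2+|k_4|^2$ in $\mathbb{Z}^2$. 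The entire construction takes place inside a finite, carefully designed frequency set on which this resonant system closes.

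Second comes the combinatorial heart: constructing a frequency set $\Lambda\subset\mathbb{Z}^2$ partitioned into $N$ ``generations'' $\Lambda_1,\dots,\Lambda_N$, each of cardinality $2^{N-1}$, such that every element of $\Lambda_{i+1}$ is a corner of exactly one nonparallel resonant rectangle (a ``nuclear family'') whose remaining corners lie in $\Lambda_i$ and $\Lambda_{i+1}$, and such that $\Lambda$ admits no other resonant interactions. One also imposes genericity so that the generations are well separated and the inner radius of $\Lambda_N$ exceeds that of $\Lambda_1$ by a factor growing exponentially in $N$, while $|\Lambda_i|$ stays constant. Under the symmetric ansatz $r_k(t)=b_i(t)$ for all $k\in\Lambda_i$, these properties force the resonant system to restrict to the finite-dimensional \emph{toy model}
\begin{align*}
-i\dot b_i = -|b_i|^2 b_i + 2\bar b_i\,(b_{i-1}^2+b_{i+1}^2),\qquad i=1,\dots,N,\quad b_0=b_{N+1}=0.
\end{align*}

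Third, I would analyze the toy model on the invariant sphere $\sum_i|b_i|^2=\mathrm{const}$. The points where a single $b_i$ has full modulus are saddles, and the coupling $2\bar b_i b_{i\pm1}^2$ supplies an unstable direction pointing toward the next generation; chaining these local passages produces an orbit whose mass starts concentrated in $\Lambda_3$ and ends concentrated in $\Lambda_{N-2}$ (staying away from the boundary generations). Because $L^2$ mass is conserved while the frequency magnitude has jumped by the exponential factor, the ratio $\|u(T)\|_{H^s}/\|u(0)\|_{H^s}\sim (R_{N-2}/R_3)^s$ is enormous, and choosing $N\sim\log K$ produces the target growth $K$. The cubic homogeneity of the toy model is scale invariant, $b\mapsto\delta\,b(\delta^2\cdot)$, so I would then rescale to make $\|u(0)\|_{H^s}\le\mu$ at the cost of dilating time by $\delta^{-2}$; tracking the passage times near the $\sim\log K$ saddles together with this dilation yields a total time of the stated form $T\le e^{(K/\mu)^c}$.

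Finally, one must transfer this toy-model orbit back to the true equation \eqref{NLSApp}. I would run the approximation in two stages: the symmetric solution of the toy model lifts exactly to a solution of the full resonant system, since the non-$\Lambda$ frequencies remain zero by the closure property (precisely the mechanism of Corollary~\ref{nogrowth}, now \emph{driven by} the nonparallel families rather than being blocked by their absence), and then the resonant system approximates the genuine NLS flow on $[0,T]$. The latter is the main obstacle: the error between the full and resonant dynamics must be controlled over a time scale on which the solution itself changes by a large factor, so naive Gronwall estimates are useless. The resolution is to combine a normal-form reduction (the square-torus version of Lemma~\ref{iterlemma}, isolating $H_0+\mathscr{L}$ and relegating $\mathscr{U}+\mathscr{R}$ to higher order) with the frequency localization, keeping the orbit inside a thin tube around the toy-model trajectory for the whole sliding time. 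Producing the combinatorial set $\Lambda$ with all the separation and non-degeneracy properties, and making this long-time perturbation argument quantitative enough to survive the rescaling, are where essentially all of the work lies.
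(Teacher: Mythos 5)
Your outline is correct and coincides with the proof the paper itself relies on: Theorem \ref{GKThm} is not proved in this paper but quoted from \cite{CKSTT10} and \cite{GK12,GK17}, and your roadmap --- reduction to the resonant system, the generational set $\Lambda$ built from nonparallel resonant rectangles satisfying exactly the conditions i--vi recalled in the appendix (with the cardinality and growth statements of Proposition \ref{Prop3.1}), the toy model $-i\dot b_i=-|b_i|^2b_i+2\bar b_i\,(b_{i-1}^2+b_{i+1}^2)$ with its chain of saddle passages, the scaling $b\mapsto\delta\,b(\delta^2\cdot)$ to shrink the data, and the long-time approximation argument --- is precisely the Colliander--Keel--Staffilani--Takaoka--Tao construction as sharpened by Guardia--Kaloshin to get the quantitative bound $T\le e^{(K/\mu)^c}$. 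The steps you explicitly defer (building $\Lambda$, the shadowing analysis of the toy model, and the approximation theorem surviving the rescaling) are exactly the content of those references, so your proposal matches the paper's cited proof rather than offering a different route.
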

As stated in Remark 1.2 of \cite{GK12}, in particular, one can show that  
	\begin{align*}
		\|\psi(T)\|_{H^s} \geq |n_1|^{2s}|\hat{\psi}_{n_1}(T)|^2+ |n_2|^{2s}|\hat{\psi}_{n_2}(T)|^2 \geq K.
	\end{align*}
It is important to mention that the initial data in Theorem \ref{GKThm} has bounded Fourier support which is precisely the type of initial data we consider throughout this paper.  The construction of the solution $\psi$ in the proof of the theorem above follows the construction  in \cite{CKSTT10}.

A key ingredient in \cite{CKSTT10}  that allows for the construction of solutions with growing Sobolev norms is the  identification of  a suitable subset of resonant frequencies, $\Lambda \subset\mathbb{Z}^2$, such that the 4-tuples, $(n_1, n_2, n_3, n_4) \in \Lambda^4$, form a subset of the set of non-degenerate resonant 4-tuples defined as 
	\begin{align*}
		\mathcal{A}:= \left\{ (n_1, n_2, n_3, n_4) \in (\mathbb{Z}^2)^4~:~ \begin{array}{c} n_1+n_2 = n_3+n_4 \\ |n_1|^2+|n_2|^2=|n_3|^2+|n_4|^2 \end{array}, n_1 \neq n_3, n_1 \neq n_4  \right\}.
	\end{align*}
Let us also define the set
	\begin{align*}
		\mathcal{A}(n):= \left\{ (n_1, n_2, n_3) \in (\mathbb{Z}^2)^3~:~ (n_1, n_2, n_3, n) \in \mathcal{A}.  \right\}.
	\end{align*}

Fix $N \gg 1$.  The set  $\Lambda$ is defined as a disjoint union of $N$ {\bf generations}:
	\begin{align*}
		\Lambda = \Lambda_1 \cup \cdots \cup \Lambda_N.	
	\end{align*}
Define a {\bf nuclear family} to be a rectangle $(n_1, n_2, n_3, n_4) \in \mathcal{A}$ such that $n_1$ and $n_2$ (known as {\bf parents}) belong to a generation $\Lambda_j$ and $n_3$ and $n_4$ (known as the {\bf children}) live in the next generation $\Lambda_{j+1}$.   The following conditions on $\Lambda$ are  imposed:

\begin{enumerate}
	\item[i] {\bf Closure} \hspace{.2cm} If $n_1, n_2, n_3 \in \Lambda$ and $(n_1, n_2, n_3) \in \mathcal{A}(n)$, then $n \in \Lambda$.
	\item[ii] {\bf Existence and uniqueness of spouse and children} \hspace{.2cm} For any $1\leq j\leq N$ and any $n_1 \in \Lambda_j$, there exists a unique nuclear family $(n_1, n_2, n_3, n_4)$ (up to trivial permutations) such that $n_1$ is a parent of this family. 
	\item[iii] {\bf Existence and uniqueness of sibling and parents} \hspace{.2cm} For any $1\leq j\leq N$ and any $n_3 \in \Lambda_{j+1}$, there exists a unique nuclear family $(n_1, n_2, n_3, n_4)$ (up to trivial permutations) such that $n_3$ is a child of this family.
	\item[iv] {\bf Nondegeneracy} \hspace{.2cm} The sibling of a frequency $n$ is never equal to its spouse.
	\item[v] {\bf Faithfulness} \hspace{.2cm} Apart from the nuclear families, $\Lambda$ does not contain any other rectangles.
\end{enumerate}
Unique to \cite{GK12}, is the {\bf No spreading condition}:
\begin{enumerate}
	\item[vi] {\bf No spreading condition} \hspace{.2cm} Consider $n \not\in \Lambda$.  Then, $n$ is vertex of at most two rectangles having two vertices in $\Lambda$ and two vertices out of $\Lambda$.
\end{enumerate}

The following Proposition holds:
	\begin{proposition}[Proposition 3.1 in \cite{GK12}]\label{Prop3.1}
		Let $K \gg 1$.  Then, there exists $N \gg 1$ and a set $\Lambda \subset \mathbb{Z}^2$, with 
			\begin{align*}
				\Lambda = \Lambda_1 \cup \cdots \cup \Lambda_N,	
			\end{align*}
		which satisfies conditions i - vi and also 
			\begin{align*}
				\frac{\sum_{n \in \Lambda_{N-1}} |n|^{2s}}{\sum_{n\in \Lambda_3}|n|^{2s}} \geq \frac{1}{2} 2^{(s-1)(N-4)} \geq K^2.	
			\end{align*}
Moreover, given any $R>0$ (which may depend on $K$), one  can ensure that each generation $\Lambda_j$ has $2^{N-1}$ disjoint frequencies $n$ satisfying $|n|\geq R$. 
	\end{proposition}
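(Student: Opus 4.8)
The plan is to separate the construction into a purely combinatorial step and a geometric realization step, in the spirit of \cite{CKSTT10}. First I would build an abstract family tree: for each $1 \le j \le N$ I declare $\Lambda_j$ to consist of $2^{N-1}$ abstract frequencies, and I define the nuclear-family (parent/child) relation so that the abstract analogues of conditions ii, iii, and iv hold by fiat --- each element of $\Lambda_j$ has exactly one spouse and exactly one pair of children in $\Lambda_{j+1}$, and siblings are never spouses. This is bookkeeping on a binary-tree-type pairing and carries no arithmetic content; it fixes the count $|\Lambda_j| = 2^{N-1}$ for every $j$.

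Next I would realize this abstract model inside $\mathbb{Z}^2 \cong \mathbb{Z}[i]$. The square-torus resonance relations $n_1 + n_2 = n_3 + n_4$ and $|n_1|^2 + |n_2|^2 = |n_3|^2+|n_4|^2$ force each nuclear family to be a genuine rectangle, so a family is determined by its center together with the complex rotation carrying the parent-diagonal to the child-diagonal. I would fix one generic rotation parameter $\theta$ (equivalently a complex ratio), propagate it through all $N$ generations to place every frequency in $\mathbb{Q}(\theta)$, and then clear denominators by a single dilation so that every point lands in $\mathbb{Z}^2$. A further dilation, permitted because both resonance relations are scale-invariant, enforces $|n| \ge R$ and keeps the $2^{N-1}$ frequencies of each generation distinct.

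The hard part will be the remaining conditions --- closure (i), faithfulness (v), and the no-spreading condition (vi), together with nondegeneracy (iv) --- since these forbid \emph{accidental} coincidences: extra resonant rectangles with vertices in $\Lambda$, outside points completing too many rectangles, or collisions among the prescribed frequencies. My approach would be a genericity argument. Each forbidden coincidence is a nontrivial polynomial equation in $\theta$; because $\Lambda$ contains only $N\,2^{N-1}$ points there are at most polynomially many such equations, none identically satisfied, so all but finitely many $\theta$ avoid every one of them. Choosing such a $\theta$ and clearing denominators as above yields a bona fide $\Lambda \subset \mathbb{Z}^2$ meeting i--vi. The delicate point is checking that the near-degenerate rectangles required for the weight growth below can still be chosen generic, without reintroducing spurious rectangles.

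Finally I would prove the norm-inflation inequality. The resonance identity $|n_1|^2 + |n_2|^2 = |n_3|^2 + |n_4|^2$ transports the $\ell^2$-weight without loss from parents to children, so by conditions ii and iii the quantity $\sum_{n \in \Lambda_j}|n|^2$ is independent of $j$; since each $|\Lambda_j| = 2^{N-1}$ is fixed, the growth of $\sum_{n \in \Lambda_j}|n|^{2s}$ can only come from spreading this fixed $\ell^2$-mass across magnitudes of increasingly disparate size. Choosing each rectangle thin enough that one child carries nearly the full squared length $|n_1|^2+|n_2|^2$ of its balanced parents while the other sits near the center, convexity of $t \mapsto t^{s}$ applied family by family gives $\sum_{n \in \Lambda_{j+1}}|n|^{2s} \ge 2^{s-1}\sum_{n \in \Lambda_j}|n|^{2s}$ up to a harmless constant. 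Iterating from generation $3$ to generation $N-1$ produces the factor $\tfrac12\,2^{(s-1)(N-4)}$, and it then suffices to fix $N$ so large that $\tfrac12\,2^{(s-1)(N-4)} \ge K^2$, which is possible precisely because $s > 1$.
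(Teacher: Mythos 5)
You should first note that the paper itself contains no proof of Proposition \ref{Prop3.1}: it is imported verbatim from \cite{GK12} (building on \cite{CKSTT10}), and the appendix instead proves a \emph{contrasting} proposition for irrational tori. So your attempt has to be measured against the Guardia--Kaloshin/CKSTT construction, whose broad architecture you have correctly identified: an abstract genealogical model with $|\Lambda_j|=2^{N-1}$, a generic placement in $\mathbb{Z}[i]$, a final dilation (both resonance relations are homogeneous, so this legitimately yields the ``$|n|\geq R$'' clause), and a spreading/convexity computation that produces the factor $2^{s-1}$ per generation, with $N$ chosen at the end so that $\tfrac12 2^{(s-1)(N-4)}\geq K^2$. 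Your observation that conditions ii and iii force $\sum_{n\in\Lambda_j}|n|^2$ to be independent of $j$ is correct and is exactly why $s>1$ is needed.

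However, there are genuine gaps in the three steps that carry the actual content. First, the growth geometry is wrong as stated: a \emph{thin} rectangle places each child next to a parent and gives ratio $\approx 1$, i.e.\ no growth at all. If the parents are balanced ($|n_1|=|n_2|$) and one child is to sit near the origin while the other carries $|n_1|^2+|n_2|^2$, the parents must be nearly orthogonal, so every family must be a near-\emph{square}; the model family is $\{1,i\}\to\{0,1+i\}$ up to rotation and scale. Second, the realization step fails as written: for generic $\theta$ the frequencies lie in $\mathbb{Q}(e^{i\theta})$, and no dilation maps that field into $\mathbb{Z}^2$; one must take the rotation to be a Gaussian-rational point of the unit circle and run the genericity argument inside that countable dense set, where each forbidden coincidence excludes only finitely many rotations \emph{provided} the corresponding polynomial does not vanish identically on the circle. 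Third --- and this is the crux --- proving that none of these polynomials vanishes identically is precisely the hard part, and a single global parameter $\theta$ is too little freedom: with one rotation propagated through the whole tree the placement is a rigid function of $\theta$, and symmetric coincidences (e.g.\ children of half the families crowding near the origin, or products of the two generation-factors agreeing identically) can be forced for \emph{every} $\theta$; this is why \cite{CKSTT10} and \cite{GK12} allot independent parameters to the families and verify conditions i, iv, v (and, in \cite{GK12}, the counting condition vi) by an inductive perturbation. Finally there is a quantitative gap: the per-generation factor is only $2^{s-1}$ up to the error measuring distance from the ideal squares, and these errors compound over the $N-4$ generations, so to keep the single uniform constant $\tfrac12$ of the proposition the placement must be $\epsilon(N)$-close to ideal, not merely ideal ``up to a harmless constant'' per family. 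You flag this tension between near-degeneracy and genericity yourself, but resolving it is the proof, not a remark within it.
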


We will show that irrationality of the torus eliminates so many resonances that it becomes impossible to create families of resonances with growing generations. In fact, in the irrational case, $\sum_{n \in \Lambda_{j}} |n|^{2s}$  remains almost constant for $j \in \{ 1, ..., N\}$.  Recall that the resonances for the irrational torus can be written as
	\begin{align*}
		\mathcal{R}&:= \left\{ (k_1, k_2, k_3, k_4) \in (\mathbb{Z}^2)^4~:~ k_1+k_2=k_3+k_4\right\}\\
			&\hspace{.5cm} \bigcap \left\{ (k_1, k_2, k_3, k_4) \in (\mathbb{Z}^2)^4~:~ \lambda_{k_1}+\lambda_{k_2}=\lambda_{k_3}+\lambda_{k_4}\right\}\\
			&=\bigcap_{i=1}^2 \left\{ (k_1, k_2, k_3, k_4) \in (\mathbb{Z}^2)^4~:~ \begin{array}{c} k^{(i)}_1+k^{(i)}_2=k^{(i)}_3+k^{(i)}_4\\ \left(k^{(i)}_1\right)^2 +\left(k^{(i)}_2\right)^2 =\left(k^{(i)}_3\right)^2 + \left(k^{(i)}_4\right)^2 \end{array}\right\}.
	\end{align*}

In other words, if one were to construct a family, $\Lambda$, defined satisfying conditions i - vi, where the closure condition (condition i) is defined with respect to $\mathcal{R}$ instead of $\mathcal{A}$, then the conclusion of Proposition \ref{Prop3.1} does not follow.  Rather, the following proposition holds 
	\begin{proposition}
		Let $N \in \mathbb{Z}_+$.  Consider a set $\Lambda \subset \mathbb{Z}^2$, with 
			\begin{align*}
				\Lambda = \Lambda_1 \cup \cdots \cup \Lambda_N,	
			\end{align*}
		which satisfies conditions i - vi with $\mathcal{R}$ replacing $\mathcal{A}$.  Then
			\begin{align*}
				\frac{\sum_{n \in \Lambda_{k}} |n|^{2s}}{\sum_{n\in \Lambda_j}|n|^{2s}} \leq 2^s	
			\end{align*}
		for any $j, k \in \{ 1, ..., N\}$.
	\end{proposition}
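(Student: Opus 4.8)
The plan is to exploit the coordinate decoupling $\mathcal{R}=\mathcal{R}_1\cap\mathcal{R}_2$ together with Lemma \ref{char} to pin down the shape of a nuclear family, and then to track separately what the family structure does to the first and second coordinates. Since the generations are disjoint, a nuclear family $(n_1,n_2,n_3,n_4)$ is non-degenerate, so by Lemma \ref{char} it cannot be of the trivial type $(n_1,n_2,n_1,n_2)$ or $(n_1,n_2,n_2,n_1)$; the only remaining possibility is the axis-parallel rectangle in which the two one-dimensional constraints are resolved oppositely, i.e. (writing $n=(n^{(1)},n^{(2)})$) the parents are $n_1=(a,c)$, $n_2=(b,d)$ with $a\ne b$, $c\ne d$, and the children are $\{n_3,n_4\}=\{(a,d),(b,c)\}$. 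The decisive observation is that such a family leaves the two-element multiset of first coordinates and the two-element multiset of second coordinates unchanged: $\{n_1^{(1)},n_2^{(1)}\}=\{n_3^{(1)},n_4^{(1)}\}$ and $\{n_1^{(2)},n_2^{(2)}\}=\{n_3^{(2)},n_4^{(2)}\}$.

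First I would use conditions (ii) and (iii) to observe that the nuclear families joining $\Lambda_j$ to $\Lambda_{j+1}$ partition $\Lambda_j$ into parent-pairs and $\Lambda_{j+1}$ into child-pairs. Summing the coordinate-multiset identity above over this disjoint family decomposition shows that the multiset $\mathcal{C}^{(1)}_j:=\{n^{(1)}:n\in\Lambda_j\}$ of first coordinates and the multiset $\mathcal{C}^{(2)}_j$ of second coordinates satisfy $\mathcal{C}^{(1)}_j=\mathcal{C}^{(1)}_{j+1}$ and $\mathcal{C}^{(2)}_j=\mathcal{C}^{(2)}_{j+1}$; by induction they are independent of $j$, equal to fixed multisets $\mathcal{C}^{(1)},\mathcal{C}^{(2)}$. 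In particular the \emph{split} quantity
\begin{align*}
S_j:=\sum_{n\in\Lambda_j}\left(|n^{(1)}|^{2s}+|n^{(2)}|^{2s}\right)=\sum_{a\in\mathcal{C}^{(1)}}|a|^{2s}+\sum_{c\in\mathcal{C}^{(2)}}|c|^{2s}
\end{align*}
does not depend on $j$; call its common value $S$.

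It then remains to compare $E_j:=\sum_{n\in\Lambda_j}|n|^{2s}=\sum_{n\in\Lambda_j}\big((n^{(1)})^2+(n^{(2)})^2\big)^s$ with $S$. For $s\ge1$ and $x,y\ge0$ the elementary two-sided bound
\begin{align*}
x^s+y^s\le(x+y)^s\le 2^{s-1}(x^s+y^s),
\end{align*}
(superadditivity on the left, convexity of $t\mapsto t^s$ on the right) applied with $x=(n^{(1)})^2$, $y=(n^{(2)})^2$ and summed over $n\in\Lambda_j$ gives $S\le E_j\le 2^{s-1}S$ for every $j$. Hence for all $j,k$,
\begin{align*}
\frac{\sum_{n\in\Lambda_k}|n|^{2s}}{\sum_{n\in\Lambda_j}|n|^{2s}}=\frac{E_k}{E_j}\le\frac{2^{s-1}S}{S}=2^{s-1}\le 2^s,
\end{align*}
which is the claim (in fact with the slightly sharper constant $2^{s-1}$).

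The only genuinely delicate point is the coordinate-multiset invariance: one must verify that nuclear families really are the opposite-resolution axis-parallel rectangles and that the partitioning provided by conditions (ii)--(iii) telescopes correctly through the generations, so that summing the per-family identities yields the global invariance of $\mathcal{C}^{(1)}$ and $\mathcal{C}^{(2)}$. This is precisely where the irrationality of $\omega$ enters, through the absence of nonparallel resonances; by contrast the two analytic inequalities in the final step are routine and independent of the dynamics.
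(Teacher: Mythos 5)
Your proposal is correct and follows essentially the same route as the paper's proof: you identify that non-degenerate families must be axis-parallel rectangles whose children swap the parents' coordinates (the paper phrases this as $\Lambda_\ell = \{(a_i, b_{\tau_\ell(i)})\}_i$ for permutations $\tau_\ell$, you phrase it as invariance of the coordinate multisets, which is the same fact), and then you compare $\sum_n |n|^{2s}$ with the split sum $\sum_n \big(|n^{(1)}|^{2s}+|n^{(2)}|^{2s}\big)$ via superadditivity and convexity, exactly as the paper does. The only difference is cosmetic: your convexity bound yields the slightly sharper constant $2^{s-1}$ where the paper settles for $2^s$.
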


	\begin{proof}
		Let $\Lambda= \Lambda_1 \cup \cdots \cup \Lambda_N$ be defined as above.  Due to the Faithfulness condition (condition v),  each 4-tuple $(n_1, n_2, n_3, n_4) \in \Lambda^4$ is a resonant rectangle, and due to the structure of $\mathcal{R}$, this rectangle must be axis parallel, since they also must be non-degenerate.   Let 
			\begin{align*}
				\Lambda_1 = \{ n_i = (a_i, b_i)\}_i
			\end{align*}
Existence and uniqueness of spouse and children (condition ii) implies that $\Lambda_1$ decomposes into a set of pairs of parents. 

Consider a pair of parents in $\Lambda_1$, $(n_{i},n_{j})$.  We can observe that the segment connecting $n_i$ to $n_j$ cannot be parallel to any axis.  If so, then without loss of generality assume that the segment connecting $n_i$ to $n_j$ is parallel to the $x$-axis.  Then there exists $b \in \mathbb{Z}$ such that $n_i=(a_i, b)= (a_j, b)=n_j$.  Since resonant rectangles must be axis parallel, the segment connecting any pair of children, $k_1, k_2 \in \Lambda_2$, must also be parallel to the $x$-axis.  The condition $n_i +n_j = k_1+k_2$, implies that $k_1=(k^{(1)}_1, b)$ and $k_2=(k^{(1)}_2, b)$ and thus $n_i, n_j, k_1, k_2$ are collinear which contradicts their forming of a non-degenerate rectangle. Therefore, we can assume that
 the segment connecting $n_i$ to $n_j$ cannot be parallel to any axis.

If $n_i, n_j \in \Lambda_1$ are a set of parents and the segment connecting $n_i$ to $n_j$ is not parallel to any axis, then there is only one possible choice among all points in $\mathbb{Z}^2$ for the pair of children that form the vertices of an axis-parallel rectangle in $\mathbb{Z}^2$. If $k_1, k_2 \in \Lambda_2$ are the children of $n_i=(a_i, b_i), n_j=(a_j, b_j)$, then 
		\begin{align*}
			k_1=(a_i, b_j)  \mbox{ and } k_2=(a_j, b_i).
		\end{align*}

Therefore, if $\Lambda_1 = \{ n_i = (a_i, b_i)\}_i$, then 
		\begin{align*}
			\Lambda_2= \{ (a_{i}, b_{\sigma(i)})\}_i
		\end{align*}
where $\sigma$ is a  permutation. By induction, for any $\ell \in \{2, ...., N\}$ there exists a permutation, $\tau_{\ell}$ such that 
		\begin{align*}
			\Lambda_{\ell}= \{ (a_{i}, b_{\tau_{\ell}(i)})\}_i.
		\end{align*}
Thus, 
		\begin{align*}
			\sum_{n \in \Lambda_{\ell}} |n|^{2s}= \sum_{i} (a^2_i+ b^2_{\tau_{\ell}(i)})^s &\leq 2^s \sum_{i} a^{2s}_i+ b^{2s}_{\tau_{\ell}(i)}\\
				&= 2^s \sum_{i} a^{2s}_i+ b^{2s}_{\tau_{j}(i)} \leq 2^s \sum_{i} (a^{2}_i+ b^{2}_{\tau_{j}(i)})^s=2^s \sum_{n \in \Lambda_{j}} |n|^{2s}
		\end{align*}
which finishes the proof.
	\end{proof}
Note that we did not use condition vi, so it is not necessary to assume $\Lambda$ satisfies condition vi. It is included to mirror Proposition \ref{Prop3.1}.

%
%


\end{document}